 \def\sD{\mathscr{D}}    
\def\dbE{\mathbb{E}}     
\def\dbF{\mathbb{F}}   \def\cF{{\cal F}}  
\def\dbH{\mathbb{H}}    \def\BH{{\bm H}}
   \def\cK{{\cal K}}  
   \def\cM{{\cal M}}  
   \def\cN{{\cal N}}  
\def\dbP{\mathbb{P}}     
\def\dbR{\mathbb{R}}     
\def\dbS{\mathbb{S}}     
 \def\sU{\mathscr{U}}
 \def\sX{\mathscr{X}}
\def\ss{\smallskip}             \def\hb{\hbox}
\def\ms{\medskip}              \def\ae{\hbox{\rm a.e.}}
        \def\lan{\langle}    \def\as{\hbox{\rm a.s.}}
\def\ds{\displaystyle}   \def\ran{\rangle}    \def\tr{\hbox{\rm tr$\,$}}
\def\no{\noindent}          
\def\ns{\noalign{\ss}}     
     \def\essinf{\mathop{\rm essinf}}
\def\rf{\eqref}            
\def\deq{\triangleq}     \def\({\Big (}       
\def\les{\leqslant}      \def\){\Big )}       
\def\ges{\geqslant}      \def\[{\Big[}        
\def\ti{\tilde}          \def\]{\Big]}        
\def\wt{\widetilde}      \def\q{\quad}        
\def\h{\widehat}         \def\qq{\qquad}      \def\1n{\negthinspace}
\def\cd{\cdot}           \def\2n{\1n\1n}      
\def\cds{\cdots}         \def\3n{\1n\2n}
\def\a{\alpha}              \def\Om{\Omega}   \def\om{\omega}
         \def\D{\Delta}           
\def\z{\zeta}         \def\Th{\Theta}  \def\th{\theta}    \def\si{\sigma}
\def\e{\varepsilon}     \def\l{\lambda}  \def\m{\mu}      \def\n{\nu}
    \def\t{\tau}     \def\f{\varphi}  \def\i{\infty}   
\def\ba{\begin{array}}                \def\ea{\end{array}}
\def\bel{\begin{equation}\label}      \def\ee{\end{equation}}
\newtheoremstyle{indented}{}{}{\it}{\parindent}{\bfseries}{.}{.5em}{}
\theoremstyle{indented}
\newtheorem{theorem}{Theorem}[section]
\newtheorem{definition}[theorem]{Definition}
\newtheorem{proposition}[theorem]{Proposition}
\newtheorem{corollary}[theorem]{Corollary}
\newtheorem{lemma}[theorem]{Lemma}
\newtheorem{remark}[theorem]{Remark}
\newtheorem{example}[theorem]{Example}
\newenvironment{taggedassumption}[1]
 {\taggedassumptionx}
 {\endtaggedassumptionx}
\sloppy  \allowdisplaybreaks[4]
\def\be{\begin{equation}}
\def\bel{\begin{equation}\label}
\def\ee{\end{equation}}
\def\bea{\begin{eqnarray}}
\def\eea{\end{eqnarray}}
\def\bt{\begin{theorem}\label}
\def\et{\end{theorem}}
\def\bc{\begin{corollary}\label}
\def\ec{\end{corollary}}
\def\bex{\begin{example}\label}
\def\ex{\end{example}}
\def\bl{\begin{lemma}\label}
\def\el{\end{lemma}}
\def\bp{\begin{proposition}\label}
\def\ep{\end{proposition}}
\def\br{\begin{remark}\label}
\def\er{\end{remark}}
\def\ba{\begin{array}}
\def\ea{\end{array}}
\def\bde{\begin{definition}\label}
\def\ede{\end{definition}}
\begin{document}

\title{\bf Time-Inconsistent Stochastic Optimal Control
         Problems and Backward Stochastic Volterra Integral Equations}

\author{Hanxiao Wang\thanks{School of Mathematical Sciences, Fudan University,
                    Shanghai 200433, China (Email: {\tt hxwang14@} {\tt fudan.edu.cn}).}~~~and~~
       Jiongmin Yong\thanks{Department of Mathematics, University of Central Florida,
                           Orlando, FL 32816, USA (Email: {\tt jiongmin.yong@ucf.edu}).
                           This author is supported in part by NSF Grant DMS-1812921.}}

\maketitle

\no\bf Abstract. \rm
An optimal control problem is considered for a stochastic differential equation with the cost functional determined by a backward stochastic Volterra integral equation (BSVIE, for short). This kind of cost functional can cover the general discounting (including exponential and non-exponential) situation with a recursive feature. It is known that such a problem is time-inconsistent in general. Therefore, instead of finding a global optimal control, we look for a time-consistent locally near optimal equilibrium strategy. With the idea of multi-person differential games, a family of approximate equilibrium strategies is constructed associated with partitions of the time intervals. By sending the mesh size of the time interval partition to zero, an equilibrium Hamilton--Jacobi--Bellman (HJB, for short) equation is derived, through which the equilibrium valued function and an equilibrium strategy are obtained. Under certain conditions, a verification theorem is proved and the well-posedness of the equilibrium HJB is established. As a sort of Feynman-Kac formula for the equilibrium HJB equation, a new class of BSVIEs (containing the diagonal value $Z(r,r)$ of $Z(\cd\,,\cd)$) is naturally introduced and the well-posedness of such kind of equations is briefly presented.

\ms

\no\bf Keywords. \rm time-inconsistent optimal control problem, backward stochastic Volterra integral equation,
stochastic differential games, equilibrium strategy, equilibrium Hamilton--Jacobi--Bellman equation.

\ms

\no\bf AMS subject classifications. \rm  93E20, 49N70, 60H20, 35F21, 49L20, 90C39

\section{Introduction}\label{Sec:Introduction}

Let $(\Om,\cF,\dbP)$ be a complete probability space on which a standard $d$-dimensional Brownian motion $W=\{W(t);0\les t<\i\}$ is defined, and let $\dbF=\{\cF_t\}_{t\ges0}$ be the natural filtration of $W(\cd)$ augmented by all the $\dbP$-null sets in $\cF$. Let $T>0$. We denote
\bel{D}\ba{ll}
\ns\ds\sX_t\equiv L_{\cF_{t}}^2(\Om;\dbR^n)=\big\{\xi:\Om\to\dbR^n~| ~\xi \hbox{ is  $\cF_{t}$-measurable, } \dbE|\xi|^2<\i\big\},\q t\in[0,T];  \\
\ns\ds\sD=\big\{(t,\xi)~|~t\in[0,T),\,\xi\in\sX_t\big\};\q\D[0,T]\deq\big\{(t,s)\bigm|0\les t\les s\les T\big\}.\ea\ee
For any {\it initial pair} $(t,\xi)\in\sD$, consider the following controlled (forward) stochastic differential equation (SDE, or FSDE, for short) on the finite horizon $[t,T]$, in its integral form:
\bel{state}X(s)=\xi+\int_t^sb(r,X(r),u(r))dr+\int_t^s\si(r,X(r),u(r))dW(r),\q s\in[t,T],\ee
where $b:[0,T]\times\dbR^n\times U\to\dbR^n$ and $\si:[0,T]\times\dbR^n\times U\to\dbR^{n\times d}$ are given (deterministic) maps with $U\subseteq\dbR^m$ being a nonempty set (which could be bounded or unbounded). In the above, $u(\cd)$ is called a {\it control process}, which belongs to the following set of {\it admissible controls}:
$$\sU[t,T]=\Big\{u:[t,T]\times\Om\to U\bigm|u(\cd)\hb{ is $\dbF$-progressively measurable, and~}\dbE\int^T_t|u(s)|^2ds<\i\Big\}.$$
According to the standard results of SDEs, under appropriate conditions, for any $(t,\xi)\in\sD$ and any $u(\cd)\in\sU[t,T]$, equation \rf{state} admits a unique (strong) solution $X(\cd)\equiv X(\cd\,;t,\xi,u(\cd))$ which is referred to as the {\it state process} corresponding to $(t,\xi)$ and $u(\cd)$. To measure the performance of the control $u(\cd)$, one could introduce the following cost functional:
\bel{cost-e}
J^0(t,\xi;u(\cd))=\dbE_t\[e^{-\l(T-t)}h^0(X(T))+\int_t^T e^{-\l(r-t)}g^0(r,X(r),u(r))dr\],\ee
for some suitable maps $h^0:\dbR^n\to\dbR$, $g^0:[0,T]\times\dbR^n\times U\to\dbR$, and some parameter $\l\ges0$, where $\dbE_t[\,\cd\,]=\dbE[\,\cd\,|\,\cF_t]$ is the conditional expectation operator. The two terms on the right-hand side of the above are called the {\it terminal cost} and the {\it running cost}, respectively, and $\l$ is called the {\it discount rate}. Note that the terms $e^{-\l(T-t)}$ in the terminal cost and $e^{-\l(r-t)}$ in the running cost are exponential functions with the same parameter $\l$. We therefore call \rf{cost-e} a cost functional with an {\it exponential discounting}. With the state equation \rf{state} and the cost functional \rf{cost-e}, one could pose the following classical stochastic optimal control problem.

\ms

{\bf Problem (C)}. For any $(t,\xi)\in\sD$, find a control $\bar u(\cd)\in\sU[t,T]$ such that
\bel{inf1}J^0(t,\xi;\bar u(\cd))=\essinf_{u(\cd)\in\sU[t,T]}J^0(t,\xi;u(\cd))=V^0(t,\xi).\ee

Any $\bar u(\cd)\in\sU[t,T]$ satisfying \rf{inf1} is called an ({\it open-loop}) {\it optimal control} of Problem (C) for the initial pair $(t,\xi)$; the corresponding state process $\bar X(\cd)\equiv X(\cd\,;t,\xi,\bar u(\cd))$ is called an ({\it open-loop}) {\it optimal state process}; $(\bar X(\cd),\bar u(\cd))$ is called an ({\it open-loop}) {\it optimal pair}; and  $V^0(\cd\,,\cd):\sD\to\dbR$ is called the {\it value function} of Problem (C).

\ms

It is known that (see, for  example, \cite{Yong 2012}) if $\bar u(\cd)$ is an open-loop optimal control of Problem (C) for the initial pair $(t,\xi)$ with the corresponding optimal state process
$\bar X(\cd)\equiv\bar X(\cd\,;t,\xi,\bar u(\cd))$, then for any $\t\in(t,T]$,
$$J^0(\t,\bar X(\t);\bar u(\cd)|_{[\t,T]})=\essinf_{u(\cd)\in\sU[\t,T]}J^0(\t,\bar X(\t);u(\cd)).$$
This means that the restriction $\bar u(\cd)|_{[\t,T]}$ of $\bar u(\cd)$ on $[\t,T]$ is an open-loop optimal control for the initial pair $(\t,\bar X(\t))$. Such a property is referred to as the {\it time-consistency} of the optimal control $\bar u(\cd)$,
or the {\it time-consistency} of Problem (C).

\ms

In 1992, Duffie and Epstein introduced {\it stochastic differential utility process} (\cite{Duffie-Epstein 1992,Duffie--Epstein 1992-1}, see also \cite{El Karoui-Peng-Quenez 1997}). We prefer to call it a {\it recursive utility/disutility process}. The main feature of such a process $Y(\cd)$ is that the current value $Y(t)$ depends on the future values $Y(s)$, $t<s\les T$ of the process. Economically, this can be explained as follows: Due to people's optimism/pessimism, the predicted future utility/disutility affects the current utility/disutility. For a classical optimal control problem (with exponential discounting), the recursive utility/disutility process $Y(\cd)$ of the state-control pair $(X(\cd),u(\cd))$ can usually be described by the following equation:
\bel{BSDE1}Y(s)=\dbE_s\[e^{-\l(T-s)}h(X(T))+\int_s^Te^{-\l(r-s)}
g(r,X(r),u(r),Y(r))dr\],\q s\in[t,T].\ee
We refer the readers to \cite{Duffie-Epstein 1992,El Karoui-Peng-Quenez 1997,Wei-Yong-Yu 2017} for more details. It is easy to see that $Y(\cd)$ solves \rf{BSDE1} if and only if for some $Z(\cd)$, the pair $(Y(\cd),Z(\cd))$ is an {\it adapted solution} to the following {\it backward stochastic differential equation} (BSDE, for short):
\bel{BSDE2}Y(s)=h(X(T))+\int_s^T\big[\l Y(r)+g(r,X(r),u(r),Y(r))\big]dr-\int_s^TZ(r)dW(r),\q s\in[t,T].\ee
Inspired by this, we could introduce more general controlled decoupled {\it forward-backward stochastic differential equation} (FBSDE, for short) whose integral form is as follows:
\bel{FBSDE}\left\{\2n\ba{ll}
\ds X(s)=\xi+\int_t^sb(r,X(r),u(r))dr+\int_t^s\si(r,X(r),u(r))dW(r),\\
\ns\ds Y(s)=h(X(T))+\int_s^Tg(r,X(r),u(r),Y(r),Z(r))dr-\int_s^TZ(r) dW(r),\ea\right.\qq s\in[t,T],\ee
with the cost functional, called a {\it recursive cost functional}, given by the following:
\bel{recursive}J^R(t,\xi;u(\cd))=Y(t).\ee
Then we may pose the following classical {\it optimal recursive control problem}.

\ms

{\bf Problem (R).} \rm For each $(t,\xi)\in\sD$, find a $\bar u(\cd)\in\sU[t,T]$ such that
\bel{inf JR}J^R(t,\xi;\bar u(\cd))=\essinf_{u(\cd)\in\sU[t,T]}J^R(t,\xi;u(\cd))\equiv V^R(t,\xi).\ee

Similar to Problem (C), any $\bar u(\cd)\in\sU[t,T]$ satisfies \rf{inf JR} is called an open-loop optimal control, etc. It is interesting that Problem (R) is also time-consistent (see \cite{Wei-Yong-Yu 2017}).

\ms

The advantage of the time-consistency is that for any initial pair $(t,\xi)$, if an optimal control $\bar u(\cd)$ is constructed, then it will stay optimal thereafter (for the later initial pair $(\t,\bar X(\t))$ along the optimal state process). This is a little too ideal. In the real world, the time-consistency rarely exists. Instead, most problems, if not all, people encountered are not time-consistent. In other words, an optimal control/policy found for the current initial pair $(t,\xi)$ will hardly stay optimal as time goes by. Such a situation is referred to as the {\it time-inconsistency}. An important reason leading to the time-inconsistency is due to people's subjective {\it time-preferences}. In fact, people usually over discount on the utility for the outcome of immediate future events. Mathematically, such a situation could be described by the so-called {\it nonexponential discounting}, meaning that the discounting terms $e^{-\l(T-t)}$ and $e^{-\l(s-t)}$ appearing in, say, $J^0(t,\xi;u(\cd))$ are replaced by some more general functions $\mu(T,t)$ and $\nu(s,t)$, respectively, so that the cost functional $J^0(t,\xi;u(\cd))$ becomes
\bel{wt J0}\wt J^0(t,\xi;u(\cd))=\dbE_t\[\m(T,t)h^0(X(T))+\int_t^T\n(r,t)g^0(r,X(r),u(r))dr\].\ee
Let us list some possible nonexponential discounting functions as follows:
\begin{enumerate}[(i)]
\item {\it Hyperbolic discounting }:
 $\mu(t,T)={1\over 1+\l_1(T-t)}$, $\nu(t,r)={1\over1+\l_2(r-t)}$ with $\l_1,\l_2>0$ and they could be equal or different;
\item {\it Heterogeneous discounting}: $\mu(t,T)=e^{-\l_1(T-t)}$, $\nu(t,r)=e^{-\l_2(r-t)}$ with $\l_1,\l_2>0$, $\l_1\ne\l_2$;
\item {\it Convex combination of two exponential discounting }: $\mu(t,T)=\a e^{-\l_1(T-t)}+(1-\a)e^{-\l_2(T-t)} $,
$\nu(t,r)=\a e^{-\l_1(r-t)}+(1-\a)e^{-\l_2(r-t)} $, with $\a\in(0,1)$, $\l_1,\l_2>0$, $\l_1\ne\l_2$;
\item {\it Quasi-exponential discounting}: $\mu(t,T)=\big(1+\a(T-t)\big)e^{-\l(T-t)}$,
$\nu(t,r)=\big(1+\a(r-t)\big)e^{-\l(r-t)}$, with $\a,\l>0$.
\end{enumerate}
We refer the reader to \cite{Ekeland 2006,Ekeland 2008,Ekeland 2010,Marin-Solano 2010,Marin-Solano 2011} for some relevant results. Inspired by \cite{Yong 2012,Yong 2014,Yong 2017,Wei-Yong-Yu 2017}, instead of \rf{wt J0}, one may consider the following more general cost functional
\bel{cost-none}
\h J(t,\xi;u(\cd))=\dbE_t\[h(t,X(T))+\int_t^T g(t,r,X(r),u(r))dr\],
\ee
which not only includes the cases with the discounting functions (i)--(iv) listed above, but also, of course, includes the case of exponential discounting. It turns out that the optimal control problem with the state equation \rf{state} and the above cost functional $\h J(t,\xi;u(\cd))$ is time-inconsistent, in general. If we let
\bel{cost-none-Y}
\h Y(s)=\dbE_s\[h(s,X(T))+\int_s^Tg(s,r,X(r),u(r))dr\],\qq s\in[t,T],\ee
then for some $\h Z(\cd\,,\cd)$, the pair $(\h Y(\cd),\h Z(\cd\,,\cd))$ is the adapted solution to the following {\it backward stochastic Volterra integral equation} (BSVIE, for short):
\bel{BSVIE-noyz}
\h Y(s)=h(s,X(T))+\int_s^T g(s,r,X(r),u(r))dr-\int_s^T\h Z(s,r)dW(r),\q s\in[t,T],\ee
and
$$\h J(t,\xi;u(\cd))=\h Y(t).$$

Motivated by the above non-exponential discounting, together with the recursive utility/disutility, it is then natural to introduce the following recursive cost functional with general (nonexponential) discounting:
\bel{BSVIE}Y(s)=h(s,X(T))+\int_s^T g(s,r,X(r),u(r),Y(r),Z(s,r))dr-\int_s^T Z(s,r)dW(r),\q s\in[t,T],\ee
where  $g:\D[0,T]\times\dbR^n\times U\times\dbR\times\dbR^{1\times d}\to\dbR$ and $h:[0,T]\times\dbR^n\to\dbR$ are suitable deterministic maps, recalling (see \rf{D})
$$\D[0,T]=\{(s,r)\in[0,T]\bigm|0\les s\les r\les T\}.$$
In \rf{BSVIE}, $g(\cd)$ and $h(\cd)$ are called the {\it generator} and the {\it free term}, respectively, of the BSVIE. By the standard results of BSVIE (see, \cite{Yong 2008,Shi-Wang-Yong 2015}), under some mild conditions, for any initial pair $(t,\xi)\in\sD$, any control $u(\cd)\in\sU[t,T]$, and the corresponding state process $X(\cd)$, equation \rf{BSVIE} admits a unique {\it adapted solution} $(Y(\cd),Z(\cd\,,\cd))\equiv(Y(\,\cd\,;t,\xi,u(\cd)),Z(\cd\,,\cd\,
;t,\xi,u(\cd)))$, by which we mean an $(\dbR\times\dbR^{1\times d})$-valued random field $(Y,Z)=\{(Y(s),Z(s,r)):(s,r)\in\D[t,T]\}$ such that
$Y(\cd)$ is $\dbF$-progressively measurable on $[t,T]$; for each fixed $s\in[t,T]$, $Z(s,\cd)$ is $\dbF$-progressively measurable on $[s,T]$; equation \rf{BSVIE} is satisfied in the usual It\^{o} sense. Now, with the adapted solution $(Y(\cd),Z(\cd\,,\cd))$ of BSVIE \rf{BSVIE}, depending on $(t,\xi,u(\cd))$, we introduce the following cost functional:
\bel{cost-functional}J(t,\xi;u(\cd))=Y(t).\ee
Thus, we are considering state equation \rf{state} with the recursive cost functional \rf{cost-functional} determined through BSVIE \rf{BSVIE}. Then the corresponding optimal control problem can be stated as:

\ms

{\bf Problem (N).} For each $(t,\xi)\in\sD$, find a $\bar u(\cd)\in\sU[t,T]$ such that
\bel{Problem-C0}J(t,\xi;\bar u(\cd))=\essinf_{u(\cd)\in\sU[t,T]}J(t,\xi;u(\cd))=V(t,\xi).\ee

Similar to before, any $\bar u(\cd)\in\sU[t,T]$ satisfying \eqref{Problem-C0} is called an ({\it open-loop}) {\it optimal control} of Problem (N) for the initial pair $(t,\xi)$;
the corresponding state process $\bar X(\cd)\equiv X(\cd\,;t,\xi,\bar u(\cd))$ is called an ({\it open-loop}) {\it optimal state process}; $(\bar X(\cd),\bar u(\cd))$ is called an ({\it open-loop}) {\it optimal pair}; and $V(\cd,\cd)$ is called the {\it value function} of Problem (N).

\ms

We point out that Problem (N) is time-inconsistent, in general. Readers might notice that in \cite{Yong 2012,Wei-Yong-Yu 2017}, a similar problem was studied, where the (recursive) cost functional was described by a family of BSDEs. In the last section, we will briefly present an argument to show that using BSVIEs seems to be more natural than using BSDEs. Because of the time-inconsistency of the above Problem (N), people also refer to the above optimal control $\bar u(\cd)$ as the {\it pre-commitment optimal control}.
\ms

Let us take this opportunity to briefly recall the history of BSVIEs. As an extension of BSDEs, a BSVIE of form
\bel{BSVIE1}Y(s)=\xi+\int_s^Tg(s,r,Y(r),Z(s,r))dr-\int_s^TZ(s,r) dW(r),\qq s\in[0,T],\ee
(where $Y(\cd)$ could be higher dimensional) was firstly studied by Lin \cite{Lin 2002}, followed by several other researchers: Aman and N'Zi \cite{Aman-N'Zi 2005}, Wang and Zhang \cite{Wang-Zhang 2007}, Djordjevi\'{c} and Jankovi\'{c} \cite{Djordjevic-Jankovic 2013,Djordjevic-Jankovic 2015}, Hu and {\O}ksendal \cite{Hu 2018}, etc. Inspired by the study of optimal control problems for forward stochastic Volterra integral equations (FSVIEs, for short), Yong \cite{Yong 2008} introduced more general BSVIEs, together with the notion of {\it adapted M-solution}.
There are quite a few follow-up works. Let us mention some: Anh--Grecksch--Yong \cite{Anh-Grecksch-Yong 2011} investigated BSVIEs in Hilbert spaces; Shi--Wang--Yong \cite{Shi-Wang-Yong 2013} studied the well-posedness of BSVIEs containing expectation (of the unknowns); Ren \cite{Ren 2010} discussed BSVIEs with jumps; Wang--Sun--Yong \cite{Wang-Sun-Yong 2018} studied BSVIE with quadratic growth (in $Z$); Wang--Yong \cite{Wang-Yong 2019} obtained a representation of the adapted (M-)solution for a class of BSVIEs via the so-called representation partial differential equations; Overbeck and R\"oder \cite{Overbeck-Roder p} even developed a theory of path-dependent BSVIEs; Numerical aspect was considered by Bender--Pokalyuk \cite{Bender-Pokalyuk 2013}; relevant optimal control problems were studied by Shi--Wang--Yong \cite{Shi-Wang-Yong 2015}, Agram--{\O}ksendal \cite{Agram-Oksendal 2015}, Wang--Zhang \cite{Wang-Zhang 2017}, and Wang \cite{Wang 2018};
Wang--Yong \cite{Wang-Yong 2015} established various comparison theorems for both adapted solutions and adapted M-solutions to BSVIEs in multi-dimensional Euclidean spaces.

\ms

For the state equation \rf{state} together with the recursive cost functional $J(t,\xi;u(\cd))$ defined by \rf{cost-functional} through the BSVIE \rf{BSVIE}, Problem (N) is expected to be  time-inconsistent. Therefore, finding an optimal control at any given initial pair $(t,\xi)$ is not very useful. Instead, one should find an equilibrium strategy which is time-consistent and possesses certain kind of local optimality. To find such a strategy, we adopt the method of multi-person differential games. The idea can be at least traced back to the work of Pollak \cite{Pollak 1968} in 1968. Later, the approach was adopted and further developed by Ekeland--Lazrak \cite{Ekeland 2006,Ekeland 2010}; Yong \cite{Yong 2012,Yong 2014,Yong 2017};
Bj\"{o}rk--Murgoci \cite{Bjork-Murgoci 2014}; Bj\"{o}rk--Murgoci--Zhou \cite{Bjork-Murgoci-Zhou 2014}; Bj\"{o}rk--Khapko--Murgoci \cite{Bjork-Khapko-Murgoci 2017}; Wei--Yong--Yu \cite{Wei-Yong-Yu 2017},  Mei--Yong \cite{Mei-Yong 2019}, and Yan--Yong \cite{Yan-Yong 2019} for various kinds of problems.

\ms

Let us now recall the approach of \cite{Yong 2012}, which leads to our current approach.
For any $\t\in[0,T)$, we first divide the time interval $[\t,T]$ into $N$ subintervals: $[t_0,t_1),[t_1,t_2),...,[t_{N-1},t_N]$ with $t_0=\t$, $t_N=T$,
and introduce an $N$-person differential game, where players are labeled from $1$ to $N$. Player $k$ takes over the system at $t=t_{k-1}$ from Player $(k-1)$, controls the system on $[t_{k-1},t_k)$, and hands over to Player $(k+1)$. The initial pair $(t_{k-1},X(t_{k-1}))$ of Player $k$ is the terminal pair of Player $(k-1)$. All the players know that each player tries to find an optimal control (in some sense) for his/her own problem and each player will discount the future cost in his/her own way, even though this player is not controlling the system later on. For Player $k$, using the representation of adapted solutions to the BSVIE by that of FSDE, together with a presentation partial differential equation, the future cost on $[t_k,T]$ will be transformed to the terminal cost at $t=t_k$; and with some suitable modification of BSVIE on $[t_{k-1},t_k]$, a suitable recursive cost functional on $[t_{k-1},t_k]$ for Player $k$ can be constructed. Then Player $k$ faces a time-consistent optimal control problem. Under proper conditions, an optimal control on $[t_{k-1},t_k]$ can be determined. This leads to an {\it approximate equilibrium strategy} and the corresponding {\it approximate equilibrium value function} of the game. Letting the mesh size $\|\Pi\|$ tend to zero, we get the limits called the {\it equilibrium strategy} and {\it equilibrium value function} of the original problem, respectively. At the same time, a so-called {\it equilibrium Hamilton--Jacobi--Bellman equation} (equilibrium HJB equation, for short) is also derived, which can be used to identify the time-consistent equilibrium value function. Under certain conditions, the equilibrium strategy is locally optimal in a proper sense. When $\si(\cd)$ is independent of the control process $u(\cd)$, under some mild conditions, we will show that the equilibrium HJB equation admits a unique classical solution. Furthermore, inspired by the idea of decoupling FBSDEs and the nonlinear Feynman--Kac formula, the (classical) solution of the equilibrium HJB equation can be represented by the solution of a new type BSVIE in which the term $Z(s,s)$ appears. Under certain conditions, a well-posedness result of such an equation is established. As a consequence, the equilibrium strategy can be expressed in terms of the solution to such a BSVIE.

\ms

The rest of this paper is organized as follows. In \autoref{Preliminaries}, we present preliminary results which will be useful in the sequel. In \autoref{sec:equlibrium-strategy}, by using the idea of multi-person differential games, a family of approximate equilibrium strategies is constructed. By letting the mesh size tend to zero, the equilibrium strategy and equilibrium value function are determined by the equilibrium HJB equation. A verification theorem as well as a well-posedness result of the equilibrium HJB equation (for a special case) is obtained in \autoref{Verification Theorem}. In \autoref{BSVIEs}, a new kind of BSVIE (containing the diagonal values $Z(s,s)$ of $Z(\cd\,,\cd)$) is introduced, which is motivated by finding a Feynman-Kac type formula for the equilibrium HJB equation. Some concluding remarks are collected in \autoref{remarks}, including a formal argument to show that BSVIE is a more suitable way to represent a recursive cost functional with nonexponential discounting, a comparison of equilibrium HJB equations resulted from two approaches, and some open questions concerning Problem (N).

\ms

\section{Preliminaries}\label{Preliminaries}

Throughout this paper, $M^\top$ stands for the transpose of a matrix $M$, $\tr(M)$ the trace of $M$, $\dbR^{n\times d}$ the Euclidean space consisting of $(n\times d)$ real matrices, endowed with the Frobenius inner product $\lan M,N\ran\mapsto\tr[M^\top N]$. Let $U\subseteq \dbR^m$ be a nonempty set which could be bounded or unbounded and $\dbS^n$ be the subspace of $\dbR^{n\times n}$ consisting of symmetric matrices. We will use $K>0$ to represent a generic constant which could be different from line to line. Let $T>0$ be a fixed time horizon and $\dbH$ (also $\dbH_1$, $\dbH_2$) be a Euclidean space (which could be $\dbR^n$, $\dbR^{n\times d}$, $\dbS^n$, etc.). Recall $\D[0,T]=\big\{(t,s)\in[0,T]^2~|~0\les t\les s\les T\big\}$ as before. In the sequel, we will need various spaces of functions and processes, which we collect here first for convenience:
$$\ba{ll}
\ns\ds L_{\cF_t}^2(\Om;\dbH)=\big\{\xi:\Om\to\dbH\bigm|\xi \hbox{ is  $\cF_{t}$-measurable, } \dbE|\xi|^2<\i\big\},\q t\in[0,T],\\
\ns\ds L_\dbF^2(\Om;C([0,T];\dbH))
=\Big\{\f:[0,T]\times\Om\to\dbH\bigm|\f(\cd)~\hb{ $\dbF$-adapted, } t\mapsto\f(t,\om)~\hb{is continuous, }\as\,\om\in\Om,\\
\ns\ds\qq\qq\qq\qq\qq\qq\qq\qq\qq\qq\dbE\big[\ds\sup_{0\les s\les T}|\f(s)|^2\big]<\i \Big\},\\
\ns\ds C_\dbF([0,T];L^2(\Om;\dbH))\1n
=\1n\Big\{\f:[0,T]\1n\to\1n L^2_{\cF_T}(\Om;\dbH)\bigm|\f(\cd)~\hb{is $\dbF$-adapted, continuous, }\ds\sup_{0\les s\les T}\dbE\big[|\f(s)|^2\big]\1n<\1n\i \Big\},\\
\ns\ds L_\dbF^2(\D[0,T];\dbH)
=\Big\{\f:\1n\D[0,T]\1n\times\1n\Om\1n\to\1n\dbH\bigm|\f(t,\cd)~\hb{is $\dbF$-progressively measurable on $[t,T]$, }\ae~t\1n\in\1n[0,T],\\
\ns\ds\qq\qq\qq\qq\qq\qq\qq\qq\qq\qq\sup_{t\in[0,T]}\dbE\int_t^T|\f(t,s)|^2ds<\i \Big\},\\
\ns\ds C^k(\dbH_1;\dbH_2)=\Big\{\f:\1n\dbH_1\to\1n\dbH_2\bigm|\f(\cd)~\hb{is $j$-th continuously differentiable for any $0\les j\les k$}\,\Big\},\\
\ns\ds C_b^k(\dbH_1;\dbH_2)=\Big\{\f:\1n\dbH_1\to\1n\dbH_2\bigm|\f(\cd)\in C^k(\dbH_1;\dbH_2),~\hb{the $j$-th derivatives are bounded, $0\les j\les k$}\,\Big\}.\ea$$

For convenience, we rewrite the state equation and the recursive cost functional below:
\bel{state-rewrite}X(s)=\xi+\int_t^sb(r,X(r),u(r))dr+\int_t^s\si(r,X(r),u(r))dW(r),
\q s\in[t,T],\ee
\bel{BSVIE-rewrite}
Y(s)=h(s,X(T))+\int_s^T g(s,r,X(r),u(r),Y(r),Z(s,r))dr-\int_s^T Z(s,r)dW(r),~s\in[t,T],\ee
and
\bel{cost-functional-rewrite}J(t,\xi;u(\cd))=Y(t).\ee
To guarantee the well-posedness of the controlled SDE \rf{state-rewrite} and BSVIE \rf{BSVIE-rewrite}, we adopt the following assumptions:

\ms

{\bf(H1)} Let the maps $b:[0,T]\times\dbR^n\times U\to\dbR^n$ and $\si:[0,T]\times\dbR^n\times U\to\dbR^{n\times d}$ be continuous. There exists a constant $L>0$ such that
$$\ba{ll}
\ns\ds|b(s,x_1,u)-b(s,x_2,u)|+|\si(s,x_1,u)-\si(s,x_2,u)|\les L|x_1-x_2|,\q \forall~(s,u)\in [0,T]\times U,~ x_1,x_2\in\dbR^n,\\
\ns\ds|b(s,0,u)|+|\si(s,0,u)|\les L(1+|u|),\q\forall~(s,u)\in [0,T]\times U.\ea$$

{\bf(H2)} Let the maps $g:\D[0,T]\times\dbR^n\times U\times\dbR\times\dbR^{1\times d}\to\dbR$ and $h:[0,T]\times\dbR^n\to\dbR$ be continuous. There exists a constant $L>0$ such that
$$\ba{ll}
\ns\ds|g(t_1,s,x_1,u,y_1,z_1)-g(t_2,s,x_2,u,y_2,z_2)|+|h(t_1,x_1)-h(t_2,x_2)|\\
\ns\ds\q\les L\big(|t_1-t_2|+|x_1-x_2|+|y_1-y_2|+|z_1-z_2|\big),\\
\ns\ds\qq\q\forall~ (s,u)\in [0,T]\times U,~ t_1,t_2\in [0,T],~ x_1,x_2\in\dbR^n,~ y_1,y_2\in\dbR,~z_1,z_2\in\dbR^{1\times d},\\
\ns\ds|g(0,s,0,u,0,0)|+|h(0,0)|\les L(1+|u|), \q\forall~ (s,u)\in [0,T]\times U.\ea$$

\ms
The following results, whose proofs are standard (see \cite{Yong-Zhou 1999} and \cite{Yong 2008,Shi-Wang-Yong 2015}), present the well-posedness of SDE \rf{state-rewrite} and BSVIE \rf{BSVIE-rewrite} under (H1)--(H2).

\bl{lmm:well-posedness-SDE}
\sl Let {\rm(H1)} hold. Then for any initial pair $(t,\xi)\in\sD$ and control $u(\cd)\in\sU[t,T]$, the state equation \rf{state-rewrite} admits a unique solution $X(\cd)\equiv X(\cd\,;t,\xi,u(\cd))\in L_\dbF^2(\Om;C([t,T];\dbR^n))$. Moreover, there exists a constant $K>0$, independent of $(t,\xi)$ and $u(\cd)$, such that
\bel{|X|}\dbE_t\(\sup_{t\les s\les T}|X(s)|^2\)\les K\dbE_t\(1+|\xi|^2+ \int_t^T|u(r)|^2dr\).\ee
In addition, if {\rm(H2)} also holds, then for any initial pair $(t,\xi)\in\sD$, control $u(\cd)\in\sU[t,T]$, and the corresponding state process $X(\cd)$, BSVIE \rf{BSVIE-rewrite} admits a unique adapted solution $(Y(\cd),Z(\cd,\cd))\equiv(Y(\cd\,;t,\xi,u(\cd)),Z(\cd,\cd\,;t,\xi,u(\cd)))\in C([t,T];L_\dbF^2(\Om;\dbR))\times L_\dbF^2(\D[t,T];\dbR^{1\times d})$.
Moreover, there exists a constant $K>0$, independent of $(t,\xi)$ and $u(\cd)$, such that
\bel{|Y|+|Z|}\sup_{t\les s\les T}\dbE_t\(|Y(s)|^2+\int_s^T|Z(s,r)|^2dr\)\les K\dbE_t\(1+|\xi|^2+\int_t^T|u(r)|^2dr\).\ee
\el

\ms

We now present a result concerning a certain type modified BSVIEs, which will play a crucial role in our subsequent analysis. For any $(t,\xi)\in\sD$ and $u(\cd)\in\sU[t,T]$, let $(X(\cd),Y(\cd),Z(\cd\,,\cd))$ be the adapted solution to \rf{state-rewrite}--\rf{BSVIE-rewrite}. For any small $\e>0$, consider the following BSVIE:
\bel{BSVIE-e}Y_\e(s)=h_\e(s,X(T))+\int_s^Tg_\e(s,r,X(r),u(r),
Y_\e(r),Z_\e(s,r))dr-\int_s^TZ_\e(s,r)dW(r),\q s\in[t,T],\ee
which is referred to as a {\it modified BSVIE}, where
$$\left\{\1n\ba{ll}
\ds h_\e(s,x)=h(t,x){\bf1}_{[t,t+\e]}(s)+h(s,x){\bf1}_{(t+\e,T]}(s),\\
\ns\ds g_\e(s,r,x,u,y,z)=g(t,r,x,u,y,z){\bf1}_{[t,t+\e]}(s)
+g(s,r,x,u,y,z){\bf1}_{(t+\e,T]}(s).\ea\right.$$
This is a natural approximation of the original BSVIE \rf{BSVIE-rewrite}, in which the generator and the free term have been modified for $s\in[t,t+\e]$ only. By \autoref{lmm:well-posedness-SDE}, under {\rm(H1)--(H2)}, BSVIE \rf{BSVIE-e} admits a unique adapted solution $(Y_\e(\cd),Z_\e(\cd,\cd))$. By the stability of adapted solutions to BSVIEs, we have
\bel{d2}\ba{ll}
\ns\ds\sup_{s\in[t,T]}\dbE_t\(|Y_\e(s)-Y(s)|^2+\int_s^T|Z_\e(s,r)-Z(s,r)|^2dr\)\\
\ns\ds\les K\sup_{s\in[t,T]}\dbE_t\Big[|h_\e(s,X(T))-h(s,X(T))|^2\\
\ns\ds\qq+\int_s^T\2n\big|g_\e(s,r,X(r),u(r),Y(r),Z(s,r))
-g(s,r,X(r),u(r),Y(r),Z(s,r))\big|^2 dr\Big]\\
\ns\ds\les K\Big\{\sup_{s\in[t,t+\e]}\dbE_t\[|h(t,X(T))-h(s,X(T))|^2\\
\ns\ds\qq+\int_s^{T}\2n\big|g(t,r,X(r),u(r),Y(r),Z(s,r))
-g(s,r,X(r),u(r),Y(r),Z(s,r))\big|^2dr\]\Big\}\les K\e^2,\ea\ee
hereafter $K>0$ is a generic constant which could be different from line to line. In particular,
\bel{|Y_e-Y|}|Y_\e(t)-Y(t)|\les K\e.\ee
In our later discussion, we will need a little better than the above. Thus, some more delicate analysis is needed. We now present the following result which gives a representation of $(Y_\e(\cd),Z_\e(\cd\,,\cd))$ in terms of $(X(\cd),Y(\cd),Z(\cd,\cd))$ and a better estimate than \rf{|Y_e-Y|}.

\bp{lemma-BSVIE-approximate-estimate} \sl Let {\rm(H1)}--{\rm(H2)} hold.
Let $(t,\xi)\in\sD$, $u(\cd)\in\sU[t,T]$, and $(X(\cd),Y(\cd),Z(\cd\,,\cd))$ be the adapted solution of \rf{state-rewrite}--\rf{BSVIE-rewrite}. For $\e\in(0,T-t]$, let  $(Y_\e(\cd),Z_\e(\cd\,,\cd))$ be the adapted solution of \rf{BSVIE-e}. Then
\bel{Y_e=Y}\left\{\1n\ba{ll}
\ds Y_\e(s)=Y(s),\qq\qq t+\e<s\les T,\\
\ns\ds Z_\e(s,r)=Z(s,r),\qq t+\e<s\les r\les T,\ea\right.\ee
and
\bel{Z_e=Z}\left\{\1n\ba{ll}
\ds Y_\e(r)=\wt Y(r),\qq\q t\les r\les t+\e,\\
\ns\ds Z_\e(s,r)=\wt Z(r),\qq t\les s\les t+\e,~s\les r\les T,\ea\right.\ee
with $(\wt Y(\cd),\wt Z(\cd))$ being the adapted solution of the following BSDE:
\bel{BSDE-wt Y}\ba{ll}
\ns\ds\wt Y(s)=h(t,X(T))+\int_s^T\(g\big(t,r,X(r),u(r),Y(r),\wt Z(r)\big){\bf1}_{[t+\e,T]}(r)\\
\ns\ds\qq\qq\qq\qq+g\big(t,r,X(r),u(r),\wt Y(r),\wt Z(r)\big){\bf1}_{[t,t+\e]}(r)\)dr-\int_s^T\wt Z(r)dW(r),\q s\in[t,T].\ea\ee
Moreover, there exists a constant $K>0$, independent of $(t,\xi,u(\cd))$, such that
\bel{lemma-BSVIE-approximate-estimate-main}|Y_\e(t)-Y(t)|\les K\e^{2},\qq\as\ee

\ep

\begin{proof} First of all, for $s\in[t+\e,T]$, BSVIEs \rf{BSVIE-e} and \rf{BSVIE-rewrite} are identical. Thus, by the uniqueness of adapted solutions to BSVIEs, we have \rf{Y_e=Y}.
Next, for the given $(X(\cd),u(\cd))$, we denote (suppressing $X(\cd)$ and $u(\cd)$, for notational simplicity)
$$\ba{ll}
\ns\ds h(s)=h(s,X(T)),\qq g(s,r,y,z)=g(s,r,X(r),u(r),y,z),\\
\ns\ds h_\e(s)=h_\e(s,X(T)),\qq g_\e(s,r,y,z)=g_\e(s,r,X(r),u(r),y,z).\ea$$
To get \rf{Z_e=Z}, we let $(y(s,\cd),z(s,\cd))$ and $(y_\e(s,\cd),z_\e(s,\cd))$ be the adapted solutions to the following BSDEs parameterized by $s\in[t,T]$, respectively:
\bel{yz}y(s,\t)=h(s)+\int_\t^Tg(s,r,Y(r),z(s,r))dr-\int_\t^Tz(s,r)
dW(r),\q\t\in[s,T],\ee
and
\bel{y_e}y_\e(s,\t)=h_\e(s)+\int_\t^Tg_\e(s,r,Y_\e(r),
z_\e(s,r))dr-\int_\t^Tz_\e(s,r)dW(r),\q\t\in[s,T].\ee
Note that $Y(\cd)$ and $Y_\e(\cd)$ appeared on the right-hand sides are known. Setting $\t=s$ in \rf{y_e}, we have
\bel{y_e*}y_\e(s,s)=h_\e(s)+\int_s^Tg_\e(s,r,Y_\e(r),z_\e(s,r))dr
-\int_s^Tz_\e(s,r)dW(r),\q s\in[t,T],\ee
Regarding \rf{BSVIE-e} (suppressing $X(\cd)$ and $u(\cd)$) as a BSVIE with generator $(s,r,y,z)\mapsto g(s,r,Y_\e(r),z)$ (independent of $y$), we see that BSVIEs \rf{y_e*} and \rf{BSVIE-e} are identical. Hence, by uniqueness of adapted solutions to BSVIEs, one has
\bel{y_e=Y_e}y_\e(s,s)=Y_\e(s),\qq z_\e(s,r)=Z_\e(s,r),\qq t\les s\les r\les T.\ee
Similarly,
\bel{y=Y}y(s,s)=Y(s),\qq z(s,r)=Z(s,r),\qq t\les s\les r\les T.\ee
Next, for $s\in[t,t+\e]$, \rf{y_e} reads
\bel{wt Y_e*}y_\e(s,\t)=h(t)+\int_\t^Tg(t,r,Y_\e(r),z_\e(s,r))dr
-\int_\t^Tz_\e(s,r)dW(r),\q\t\in[s,T],\ee
which is a BSDE on $[s,T]$ with the generator and terminal term independent of $s\in[t,t+\e]$. Therefore,
\bel{wt Y_e=Y}y_\e(s,\t)=y_\e(\t),\q z_\e(s,\t)=z_\e(\t),\qq s\in[t,t+\e],~\t\in[s,T].\ee
Combining \rf{y_e=Y_e} and \rf{wt Y_e=Y}, we further have that
\bel{Z=Z}z_\e(\t)=z_\e(s,\t)=Z_\e(s,\t)\equiv Z_\e(\t),\qq s\in[t,t+\e],~\t\in[s,T],\ee
i.e., in the current case, $Z_\e(s,\t)$ is independent of $s$ and can be denoted by $Z_\e(\t)$. Clearly, $(y_\e(\cd),z_\e(\cd))$ satisfies (pick $s=t$ in \rf{wt Y_e*}) the following BSDE on $[t,T]$:
\bel{wt Y_e**}y_\e(\t)=h(t)+\int_\t^Tg(t,r,Y_\e(r),z_\e(r))dr-\int_\t^Tz_\e(r)dW(r),\q\t\in[t,T],\ee
Comparing \rf{wt Y_e**} with the following (taking $s=t$ in \rf{yz})
\bel{y(t)}y(t,\t)=h(t)+\int_\t^Tg(t,r,Y(r),z(t,r))dr-\int_\t^Tz(t,r)dW(r),\q
\t\in[t,T],\ee
making use of the fact that (see \rf{Y_e=Y}) $Y_\e(r)=Y(r)$ for $r\in[t+\e,T]$, we see that the above BSDEs \rf{wt Y_e**} and \rf{y(t)} are identical for $\t\in[t+\e,T]$. Hence,
\bel{2.23}y_\e(\t)=y(t,\t)\equiv y(\t),\qq z_\e(\t)=z(t,\t)\equiv z(\t),\qq\t\in[t+\e,T].\ee
Namely, for $\t\in[t+\e,T]$, $(y_\e(\t),z_\e(\t))$ is independent of $\e>0$ and $(y(t,\t),z(t,\t))$ is independent of $t$. Thus, both can be denoted by $(y(\t),z(\t))$. Combining with \rf{Z=Z} and \rf{2.23}, one has
\bel{z=Z_e}z(\t)=Z_\e(\t),\qq\t\in[t+\e,T].\ee
Also, taking $\t=t+\e$ in \rf{wt Y_e**}, we have (noting \rf{2.23})
%
%
%
$$y(t+\e)=h(t)+\int_{t+\e}^Tg(t,r,Y(r),z(r))dr
-\int_{t+\e}^Tz(r)dW(r),$$
which is $\cF_{t+\e}$-measurable, as $(y_\e(\cd),z_\e(\cd))=(y(\cd),z(\cd))$ solves BSDE \rf{wt Y_e**} on $[t+\e,T]$. Hence, (still remember that $s\in[t,t+\e]$ so that \rf{Z=Z}, \rf{2.23} and \rf{z=Z_e} hold) \rf{BSVIE-e} becomes
\bel{BSDE-Y_e}\begin{aligned}
 Y_\e(s)&=h(t)+\int_{t+\e}^Tg(t,r,Y_\e(r),Z_\e(s,r))dr
-\int_{t+\e}^TZ_\e(s,r)dW(r)\\
&\qq\qq+\int_s^{t+\e}g(t,r,Y_\e(r),Z_\e(s,r))dr
-\int_s^{t+\e}Z_\e(s,r)dW(r)\\
&=h(t)+\int_{t+\e}^Tg(t,r,Y(r),z(r))dr
-\int_{t+\e}^Tz(r)dW(r)\\
&\qq\qq+\int_s^{t+\e}g(t,r,Y_\e(r),Z_\e(r))dr
-\int_s^{t+\e}Z_\e(r)dW(r)\\
&=y(t+\e)+\int_s^{t+\e}g(t,r,Y_\e(r),Z_\e(r))dr
-\int_s^{t+\e}Z_\e(r)dW(r),
\end{aligned}
\ee
which is a BSDE on $[t,t+\e]$, since $y(t+\e)$ is $\cF_{t+\e}$-measurable. Also, for fixed $t$, from \rf{yz}, one has the following:
\bel{y(bar t)}y(t,s)=y(t+\e)+\int_s^{t+\e}g(t,r,Y(r),z(t,r))dr
-\int_s^{t+\e}z(t,r)dW(r),\q s\in[t,t+\e].\ee
We now have two BSDEs \rf{BSDE-Y_e} and \rf{y(bar t)} on $[t,t+\e]$ which have the same terminal value $y(t+\e)$, and with different generators $g(t,\cd\,,Y_\e(\cd),z)$ and $g(t,\cd\,,Y(\cd),z)$. Hence, by the stability estimate of BSDE and \rf{d2}, we have
$$\ba{ll}
\ns\ds\dbE_t\[\sup_{s\in[t,t+\e]}|y(t,s)-Y_\e(s)|^2+\int_t^{t+\e}|z(t,r)-Z_\e(r)|^2dr\]\\
\ns\ds\q\les K\dbE_t\(\int_t^{t+\e}|g(t,r,Y(r),Z_\e(r))-g(t,r,Y_\e(r),
Z_\e(r))|dr\)^2\\
\ns\ds\q\les K\dbE_t\(\int_t^{t+\e}|Y(r)-Y_\e(r)|dr\)^2\les K\e\dbE_t\(\int_t^{t+\e}|Y(r)-Y_\e(r)|^2 dr\)\les K\e^4.\ea$$
Consequently,
$$|Y_\e(t)-Y(t)|^2=|Y_\e(t)-y(t,t)|^2\les\sup_{s\in[t,T]}\dbE_t\big[|Y_\e(s)-y(t,s)
|^2\big]\les K\e^4,\qq\as,$$
which proves the estimate \rf{lemma-BSVIE-approximate-estimate-main}. Finally, from \rf{BSDE-Y_e} and \rf{yz}, together with \rf{z=Z_e}, for $s\in[t,t+\e]$, one has
\bel{BSDE-Y_e*}\begin{aligned}
 Y_\e(s)&=h(t)+\int_{t+\e}^Tg(t,r,Y(r),z(r))dr
-\int_{t+\e}^Tz(r)dW(r)\\
&\qq\qq\qq+\int_s^{t+\e}g(t,r,Y_\e(r),Z_\e(r))dr
-\int_s^{t+\e}Z_\e(r)dW(r)\\
&=h(t)+\int_{t+\e}^Tg(t,r,Y(r),Z_\e(r))dr
-\int_{t+\e}^TZ_\e(r)dW(r)\\
&\qq\qq\qq+\int_s^{t+\e}g(t,r,Y_\e(r),Z_\e(r))dr
-\int_s^{t+\e}Z_\e(r)dW(r).\end{aligned}\ee
Hence, if $(\wt Y(\cd),\wt Z(\cd))$ is the adapted solution of \rf{BSDE-wt Y}, then \rf{Z_e=Z} holds. \end{proof}

\ms

Next, let us consider the following system of decoupled FSDEs and BSVIE:
\bel{FBSDE-no-u}\left\{\2n\ba{ll}
\ds X(s)=\xi+\int_t^s b(r,X(r))dr+\int_t^s\si(r,X(r))dW(r),\q s\in[t,T],\\
\ns\ds Y(s)=h(s,X(T))+\int_s^T g(s,r,X(r),Y(r),Z(s,r))dr-\int_s^T Z(s,r)dW(r),\q s\in[t,T],\ea\right.\ee
where $(t,\xi)\in\sD$. For the maps $b,\si,g,h$ in the above, we assume that the corresponding (H1)--(H2) (ignoring $u(\cd)$) hold, by \autoref{lmm:well-posedness-SDE}, system \rf{FBSDE-no-u} admits a unique adapted solution $(X(\cd),Y(\cd),Z(\cd\,,\cd))$. Further, we have the following representation (whose proof can be found in \cite{Wang-Yong 2019}).

\begin{proposition}\label{representation-theorem}
\sl Suppose that $\Th(\cd,\cd,\cd)\in C^{0,1,2}(\D[0,T]\times\dbR^n;\dbR)$ is a classical solution to the following PDE:
\bel{PDE-FBSDE}\left\{\2n\ba{ll}
\ds\Th_s(t,s,x)+\BH^0(t,s,x,\Th(s,s,x),\Th_x(t,s,x),\Th_{xx}(t,s,x))=0,\q (t,s,x)\in\D[0,T]\times\dbR^n,\\
\ns\ds\Th(t,T,x)=h(t,x),\q (t,x)\in[0,T]\times\dbR^n,\ea\right.\ee
where
$$\ba{ll}
\ns\ds\BH^0(t,s,x,\th,p,P)={1\over2}\tr\big[P\si(s,x)\si(s,x)^\top \big]+pb(s,x)+g(t,s,x,\th,p\si(s,x)),\\
\ns\ds\qq\qq\qq\qq\qq\qq(t,s,x,\th,p,P)\in\D[0,T]\times\dbR^n\times\dbR\times\dbR^{1\times n}\times\dbS^n.\ea$$
For any $(t,\xi)\in\sD$, suppose equation \rf{FBSDE-no-u} admits a unique adapted solution $(X(\cd),Y(\cd),Z(\cd\,,\cd))\equiv(X(\cd\,;t,\xi),Y(\cd\,;t,\xi),
Z(\cd\,,\cd\,;t,\xi))$, then
\bel{Y=Th}Y(s)=\Th(s,s,X(s)),~Z(s,r)=\Th_x(s,r,X(r))\si(r,X(r)),\q(s,r)\in\D[t,T],~\as\ee

\end{proposition}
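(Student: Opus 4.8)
The plan is to verify that the pair built from the right-hand side of \rf{Y=Th} is itself an adapted solution of the BSVIE in \rf{FBSDE-no-u}, and then to conclude by the uniqueness asserted in \autoref{lmm:well-posedness-SDE}. Concretely, for each fixed parameter $s\in[t,T]$ I set
$$\wt Y(s)=\Th(s,s,X(s)),\qq \wt Z(s,r)=\Th_x(s,r,X(r))\si(r,X(r)),\qq r\in[s,T],$$
and aim to show that $(\wt Y(\cd),\wt Z(\cd\,,\cd))$ solves the BSVIE. Since $\Th\in C^{0,1,2}(\D[0,T]\times\dbR^n;\dbR)$, freezing the first argument makes $(r,x)\mapsto\Th(s,r,x)$ a $C^{1,2}$ function, so It\^o's formula is available for the forward dynamics of \rf{FBSDE-no-u}.

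First I would apply It\^o's formula to $r\mapsto\Th(s,r,X(r))$ on $[s,T]$, with $s$ held fixed as a parameter. This yields a drift given by $\Th_r+\Th_x b+{1\over2}\tr\big[\Th_{xx}\si\si^\top\big]$, all evaluated at $(s,r,X(r))$, together with a diffusion term $\Th_x(s,r,X(r))\si(r,X(r))\,dW(r)$ whose coefficient is precisely the candidate $\wt Z(s,r)$. The crucial step is then to substitute the PDE \rf{PDE-FBSDE}, read at the point obtained by setting its generic arguments $(t,s,x)$ equal to $(s,r,X(r))$: this replaces the It\^o drift by $-g\big(s,r,X(r),\Th(r,r,X(r)),\Th_x(s,r,X(r))\si(r,X(r))\big)$. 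Here is the feature that motivates the diagonal term $\Th(s,s,x)$ in \rf{PDE-FBSDE}: the $\th$-slot of $\BH^0$ is filled by the diagonal value $\Th(r,r,X(r))=\wt Y(r)$, \emph{not} by $\Th(s,r,X(r))$, while the $z$-slot remains the off-diagonal quantity $\wt Z(s,r)$.

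Integrating from $s$ to $T$ and using the terminal condition $\Th(s,T,x)=h(s,x)$ from \rf{PDE-FBSDE} then gives
$$\wt Y(s)=h(s,X(T))+\int_s^T g\big(s,r,X(r),\wt Y(r),\wt Z(s,r)\big)\,dr-\int_s^T\wt Z(s,r)\,dW(r),\qq s\in[t,T],$$
which is exactly BSVIE \rf{FBSDE-no-u}. A routine verification using the continuity of $\Th$ and $\Th_x$ together with the moment bound \rf{|X|} shows that $(\wt Y,\wt Z)$ lies in $C([t,T];L_\dbF^2(\Om;\dbR))\times L_\dbF^2(\D[t,T];\dbR^{1\times d})$, so it is a bona fide adapted solution. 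The uniqueness part of \autoref{lmm:well-posedness-SDE} then forces $(\wt Y,\wt Z)=(Y,Z)$, establishing \rf{Y=Th}.

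The main obstacle is the careful bookkeeping of the two distinct roles of the first and second arguments of $\Th$: under It\^o's formula the first argument is frozen as the BSVIE parameter $s$ and differentiation acts only in the second (time) slot and in $x$, yet the PDE forces the generator's $Y$-argument to collapse onto the diagonal $\Th(r,r,X(r))$. Getting this alignment right is precisely what allows the off-diagonal representation $Z(s,r)=\Th_x(s,r,X(r))\si(r,X(r))$ to coexist consistently with the diagonal representation $Y(r)=\Th(r,r,X(r))$. A secondary technical point is justifying that the stochastic integral is a genuine martingale so that the identity holds in the adapted-solution sense; this follows from the regularity of $\Th$ and the estimate \rf{|X|}.
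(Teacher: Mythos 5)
Your proof is correct and is essentially the argument underlying the paper's statement (the paper does not prove this proposition itself but defers to \cite{Wang-Yong 2019}): freeze the first argument of $\Th$, apply It\^o's formula to $r\mapsto\Th(s,r,X(r))$ on $[s,T]$, use the PDE \rf{PDE-FBSDE} evaluated at $(s,r,X(r))$ --- whose $\th$-slot then collapses to the diagonal value $\Th(r,r,X(r))=\wt Y(r)$ --- to identify the It\^o drift with $-g\big(s,r,X(r),\wt Y(r),\wt Z(s,r)\big)$, and conclude from the assumed uniqueness of adapted solutions to the BSVIE in \rf{FBSDE-no-u}. The only step you should not label as entirely routine is the membership $(\wt Y,\wt Z)\in C([t,T];L_\dbF^2(\Om;\dbR))\times L_\dbF^2(\D[t,T];\dbR^{1\times d})$ together with the true-martingale property of the stochastic integral: continuity of $\Th$ and $\Th_x$ alone does not give this, so one needs either polynomial-growth bounds on $\Th$, $\Th_x$ combined with \rf{|X|}, or a localization (stopping-time) argument before the uniqueness assertion can legitimately be invoked.
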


As a convention here and below, for any differentiable function $\f:\dbR^n\to\dbR$ the gradient $\f_x:\dbR^n\to\dbR^{1\times n}$ is a row vector valued function. From the above, we see that when $r\mapsto\Th_x(s,r,x)$ is continuous, $r\mapsto Z(s,r)$ is continuous.

\begin{remark}\rm When
\bel{g0}g(s,r,x,y,z)\equiv g(r,x,y,z),\qq h(s,x)\equiv h(x),\ee
equation \rf{FBSDE-no-u} becomes a decoupled FBSDE. Then \autoref{representation-theorem} turns into a representation theorem of the adapted solution to the corresponding FBSDE.
\end{remark}

To conclude this section, let us recall a result for Problem (R). The corresponding HJB equation reads
\bel{HJB-FBSDE}\left\{\2n\ba{ll}
\ds V^R_s(s,x)+\inf_{u\in U}\BH^R\big(s,x,u,V^R(s,x),V^R_x(s,x),V^R_{xx}(s,x)\big)=0,\q (s,x)\in[0,T]\times\dbR^n,\\
\ns\ds V^R(T,x)=h(x),\q x\in\dbR^n,\ea\right.\ee
with
$$\ba{ll}
\ns\ds\BH^R(s,x,u,y,p,P)={1\over2}\tr\(P\si(s,x,u)\si(s,x,u)^\top\)+pb(s,x,u)+g\big(s,x,u,y,p\si(s,x,u)\big),\\
\ns\ds\qq\qq\qq\qq\qq\qq(s,x,u,y,p,P)\in[0,T]\times\dbR^n\times U\times\dbR\times\dbR^{1\times n}\times\dbS^n.\ea$$

The following result is a standard verification theorem of Problem (R). One is referred to Wei--Yong--Yu \cite{Wei-Yong-Yu 2017} for a proof of this result.

\begin{proposition}\label{veri-thm} \sl Let {\rm(H1)--(H2)} hold with $g$ and $h$ being of form \rf{g0}. Suppose that $V^R(\cd,\cd)\in C^{1,2}([0,T]\times\dbR^n;\dbR)$ is a classical solution of the HJB equation \rf{HJB-FBSDE}. Then for any initial pair $(t,\xi)\in\sD$,
$$V^R(t,\xi)\les J(t,\xi;u(\cd)),\qq\as,\q\forall u(\cd)\in\sU[t,T].$$
Let $(t,\xi)\in\sD$ be a given initial pair and $(\bar X(\cd),\bar u(\cd))$ be the corresponding state-control pair such that
$$\bar u(s)\in\arg\min~\BH^R\big(s,\bar X(s),\cd\,,V^R(s,\bar X(s)),V^R_x(s,\bar X(s)),V^R_{xx}(s,\bar X(s))\big),\q\ae~s\in[t,T],$$
then
$$V^R(t,\xi)= J(t,\xi;\bar u(\cd)),~\as$$
In another word, $V^R(\cd\,,\cd)$ is the value function of  {\rm Problem (R)} and $(\bar X(\cd),\bar u(\cd) )$ is an optimal pair of {\rm Problem (R)} for the initial pair $(t,\xi)$.
\end{proposition}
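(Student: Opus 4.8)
The plan is to prove both assertions by comparing the cost process $Y(\cd)$ with the process obtained by evaluating the classical solution $V^R$ along the state trajectory, combining It\^o's formula with the comparison theorem for BSDEs. First I would fix $(t,\xi)\in\sD$ and an arbitrary $u(\cd)\in\sU[t,T]$, with corresponding state process $X(\cd)$ and adapted solution $(Y(\cd),Z(\cd))$ of the BSDE defining $J(t,\xi;u(\cd))=Y(t)$. Applying It\^o's formula to $s\mapsto V^R(s,X(s))$ and writing $\eta(s)=V^R(s,X(s))$, $\z(s)=V^R_x(s,X(s))\si(s,X(s),u(s))$, the terminal condition $V^R(T,x)=h(x)$ gives
$$\eta(s)=h(X(T))-\int_s^T\big[V^R_r+V^R_xb+{1\over2}\tr(V^R_{xx}\si\si^\top)\big]dr-\int_s^T\z(r)dW(r),\q s\in[t,T].$$

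Next I would invoke the HJB equation \rf{HJB-FBSDE}. Since $V^R_s=-\inf_{u\in U}\BH^R$, for the particular control value $u(r)$ one has $V^R_r+\BH^R\big(r,X(r),u(r),V^R,V^R_x,V^R_{xx}\big)\ges0$, and recalling $V^R_x\si=\z$ this rearranges to
$$-\big[V^R_r+V^R_xb+{1\over2}\tr(V^R_{xx}\si\si^\top)\big]\les g\big(r,X(r),u(r),\eta(r),\z(r)\big).$$
Substituting into the previous display shows that $(\eta(\cd),\z(\cd))$ is a \emph{subsolution} of the BSDE solved by $(Y(\cd),Z(\cd))$: it obeys the same equation with the same terminal value $h(X(T))$ but with a driver pointwise dominated by $g(r,X(r),u(r),\cd\,,\cd)$. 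As $g$ is Lipschitz in $(y,z)$ by {\rm(H2)}, the comparison theorem for BSDEs yields $\eta(s)\les Y(s)$ for all $s\in[t,T]$; in particular $V^R(t,\xi)=\eta(t)\les Y(t)=J(t,\xi;u(\cd))$, which is the first assertion.

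For the optimality claim I would rerun the computation along $(\bar X(\cd),\bar u(\cd))$. The hypothesis $\bar u(s)\in\arg\min\BH^R(\cds)$ turns the HJB \emph{inequality} above into an \emph{equality} for a.e.\ $s\in[t,T]$, so $(\eta(\cd),\z(\cd))$ now solves \emph{exactly} the BSDE associated with $\bar u(\cd)$. By the uniqueness of adapted solutions (\autoref{lmm:well-posedness-SDE}) it must coincide with the adapted solution $(\bar Y(\cd),\bar Z(\cd))$ whose initial value defines $J(t,\xi;\bar u(\cd))=\bar Y(t)$, whence $V^R(t,\xi)=\eta(t)=\bar Y(t)=J(t,\xi;\bar u(\cd))$. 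Together with the first part this identifies $V^R$ as the value function and $(\bar X(\cd),\bar u(\cd))$ as an optimal pair.

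The step I expect to be the main obstacle is integrability. Since $V^R\in C^{1,2}$ is not assumed to have bounded derivatives, $\z(\cd)=V^R_x\si$ need not a priori belong to $L_\dbF^2$, so the It\^o correction term is only a local martingale and the comparison theorem cannot be applied verbatim. I would resolve this by a localization argument: stop at the first exit times of an increasing sequence of balls in the state space, carry out the It\^o expansion and comparison on each stopped interval, and pass to the limit using the a priori moment bounds \rf{|X|}--\rf{|Y|+|Z|} of \autoref{lmm:well-posedness-SDE} together with a polynomial-growth control on $V^R$ and its first derivative (either derived from the equation or added to the standing hypotheses, as is customary in this setting).
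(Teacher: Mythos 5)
Your proof is correct and is essentially the standard argument behind this proposition: the paper itself gives no proof, deferring to Wei--Yong--Yu \cite{Wei-Yong-Yu 2017}, and the proof there proceeds exactly as you do --- apply It\^o's formula to $s\mapsto V^R(s,X(s))$, use the HJB equation to see that $\big(V^R(s,X(s)),V^R_x(s,X(s))\si(s,X(s),u(s))\big)$ solves a BSDE with the same terminal value $h(X(T))$ and a driver dominated by $g$, invoke the BSDE comparison theorem for the inequality, and use the $\arg\min$ condition plus uniqueness of adapted solutions for the equality. Your closing caveat about integrability of $V^R_x\si$ (requiring growth control on $V^R$ and its derivatives, or a localization argument) is the right technical point; it is the standard hypothesis under which such verification theorems are proved in the cited reference.
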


\section{Feedback Strategy and Equilibrium Strategy}\label{sec:equlibrium-strategy}

\ms

Since Problem (N) is time-inconsistent, instead of looking for an optimal control, we should find a so-called {\it time-consistent equilibrium strategy} for it, which is time-consistent and is locally optimal in a suitable sense. Inspired by the method of multi-person differential games developed in \cite{Yong 2012}, we will introduce {\it approximate equilibrium strategies} associated with partitions of time intervals, together with their constructions, and investigate the limit as the mesh size of the partition tends to zero. To this end, let us first introduce the following definition.

\bde{feedback strategy} \rm Let $\t\in[0,T)$. A map $\Psi:[\t,T]\times\dbR^n\to U$ is called a {\it feedback strategy} (of state equation \rf{state}) on $[\t,T]$ if for every $t\in[\t,T]$ and $\xi\in\sX_t$, the following {\it closed-loop} system:
\bel{closed-loop}X(s)=\xi+\int_t^sb\big(s,X(s),\Psi(s,X(s))\big)ds+\int_t^s\si\big(s,X(s),
\Psi(s,X(s))\big)dW(s),\q s\in[t,T],\ee
admits a unique solution $X(\cd)\equiv X(\cd\,;t,\xi,\Psi(\cd\,,\cd))\equiv X^\Psi(\cd)$.

\ede

Under (H2), for a given feedback strategy $\Psi(\cd\,,\cd)$, the following BSVIE:
\bel{closed-cost}\ba{ll}
\ns\ds Y(s)=h(s,X^\Psi(T))+\int_s^Tg(s,r,X^\Psi(r),\Psi(r,X^\Psi(r)),Y(r),Z(s,r))ds\\
\ns\ds\qq\qq\qq\qq\qq-\int_s^TZ(s,r)dW(r),\q s\in[t,T]\ea\ee
admits a unique adapted solution $(Y(\cd),Z(\cd\,,\cd))\equiv(Y^\Psi(\cd),Z^\Psi(\cd\,,\cd))$. Therefore, the corresponding recursive cost functional at $(t,\xi)$ is well-defined:
\bel{cost*}J(t,\xi;\Psi(\cd\,,\cd))=Y^\Psi(t).\ee
In this case, the outcome $u(\cd)$ of $\Psi(\cd\,,\cd)$, called a {\it closed-loop control}, given by the following
$$u(s)=\Psi(s,X^\Psi(s)),\qq s\in[t,T],$$
is {\it time-consistent}. Moreover, by  \autoref{representation-theorem}, if $\Th^\Psi(\cd,\cd,\cd)\in C^{0,1,2}(\D[\t,T]\times\dbR^n;\dbR)$ is a classical solution to the following representation PDE (parameterized by $t\in[\t,T]$):
\bel{PDE-FBSDE*}\left\{\2n\ba{ll}
\ds\Th^\Psi_s(t,s,x)+\BH^\Psi\big(t,s,x,\Th^\Psi(s,s,x),\Th^\Psi_x
(t,s,x),\Th^\Psi_{xx}(t,s,x)\big)=0,\q(t,s,x)\in\D[\t,T]\times\dbR^n,\\
\ns\ds\Th^\Psi(t,T,x)=h(t,x),\q(t,x)\in[\t,T]\times\dbR^n,\ea\right.\ee
where
$$\ba{ll}
\ns\ds\BH^\Psi(t,s,x,\th,p,P)={1\over2}\tr\big[P\si\big(s,x,\Psi(s,x)\big)
\si\big(s,x,\Psi(s,x)\big)^\top\big]+pb\big(s,x,\Psi(s,x)\big)\\
\ns\ds\qq\qq\qq\qq\q+g\big(t,s,x,\Psi(s,x),\th,p\si(s,x,\Psi(s,x))\big),\\
\ns\ds\qq\qq\qq\qq\qq\qq(t,s,x,\th,p,P)\in\D[\t,T]\times\dbR^n\times\dbR\times\dbR^{1\times n}\times\dbS^n,\ea$$
then $(Y^\Psi(\cd),Z^\Psi(\cd\,,\cd))$ admits the following representation:
\bel{Y=Th*}\left\{\2n\ba{ll}
\ds Y^\Psi(s)=\Th^\Psi(s,s,X^\Psi(s)),\qq s\in[t,T],~\as,\\
\ns\ds Z^\Psi(s,r)=\Th^\Psi_x(s,r,X^\Psi(r))\si\big(r,X^\Psi(r),\Psi(r,X^\Psi(r))\big),
\q(s,r)\in\D[t,T],~\as,\ea\right.\ee
with $X^\Psi(\cd)$ being the solution of \rf{closed-loop}.

\ms

Next, we introduce the following notion, which combines the time-consistency and the local optimality.

\bde{equilibrium strategy} \rm A feedback strategy $\Psi:[\t,T]\times\dbR^n\to U$ (of system \rf{state}) is called an {\it equilibrium strategy} on $[\t,T]$ if for any $t\in[\t,T)$, $\xi\in\sX_t$, and $\e>0$ small with $t+\e\les T$, there exists a family of feedback strategies $\Psi^\e:[t,T]\times\dbR^n\to U$ such that
\bel{lim}\left\{\2n\ba{ll}
\ds\Psi^\e(s,x)=\Psi(s,x),\qq\qq(s,x)\in[t+\e,T]\times\dbR^n,\\
\ns\ds\lim_{\e\to0}\sup_{s\in[t,t+\e),|x|\les M}|\Psi^\e(s,x)-\Psi(s,x)|=0,\qq\forall M>0,\ea\right.\ee
and
\bel{optimality}J(t,\xi;\Psi^\e)\les J\big(t,\xi;u\oplus\Psi|_{[t+\e,T]}\big)+o(\e),\qq\forall u(\cd)\in\sU[t,t+\e],\ee
where $o(\e)$ is uniform in $u(\cd)\in\sU[t,t+\e]$ and
\bel{u+Psi}\big(u\oplus\Psi|_{[t+\e,T]}\big)(s)=\left\{\2n\ba{ll}
\ds u(s),\qq\qq\q\,\, s\in[t,t+\e),\\
\ns\ds\Psi(s,X^\Psi(s)),\qq s\in[t+\e,T].\ea\right.\ee
\ede

From \rf{optimality}, we see that the family $\{\Psi^\e(\cd\,,\cd)\}_{\e>0}$ satisfies the following:
\bel{near-optim}J(t,\xi;\Psi^\e)\les\inf_{u(\cd)\in\sU[t,t+\e]}J\big(t,\xi;u\oplus
\Psi|_{[t+\e,T]}\big)+o(\e),\ee
which can be referred to as the {\it local near-optimality} of the family $\{\Psi^\e(\cd\,,\cd)\}_{\e>0}$ at $(t,\xi)$ (see \cite{Zhou-1998} for the notion of near-optimality, for standard time-consistent problems). Further, if the following holds
$$\sup_{s\in[t,t+\e),|x|\les M}|\Psi^\e(t,x)-\Psi(t,x)|=o(\e),\qq\forall M>0,$$
then \rf{near-optim} leads to
\bel{near-optim*}J\big(t,\xi;\Psi\big|_{[t,T]}\big)\les\inf_{u(\cd)\in\sU[t,t+\e]}
J\big(t,\xi;u\oplus\Psi|_{[t+\e,T]}\big)+o(\e),\ee
which can be referred to as the local near-optimality of $\Psi(\cd\,,\cd)$ itself (instead of the family $\{\Psi^\e(\cd\,,\cd)\}_{\e>0}$).

\ms

To construct equilibrium strategies, we need to make some preparations. First, we set
\bel{Hamilton}\ba{ll}
\ns\ds\BH(t,s,x,u,\th,p,P)={1\over2}\tr\[P\si(s,x,u)\si(s,x,u)^\top\]+pb(s,x,u)
+g\big(t,s,x,u,\th,p\si(s,x,u)\big),\\
\ns\ds\qq\qq\qq\qq\qq\qq\qq\qq(t,s,x,u,\th,p,P)\in\D[0,T]\times\dbR^n\times U\times\dbR\times\dbR^{1\times n}\times\dbS^n.\ea\ee
The infimum (or minimum) of the map $u\mapsto\BH(t,s,x,u,\th,p,P)$ will be needed below. However, this map may not be bounded below in general; even if it is bounded below, the infimum might not be achieved; and even if the minimum exists (in the case, say, $U$ is compact), the minimum might not be unique and might not have needed regularity properties. To avoid all these inconvenient situations which are not our main concern in this paper, similar to \cite{Yong 2012}, we introduce the following technical assumption, which, as mentioned in \cite{Yong 2012}, is satisfied by some situations.

\ms

{\bf(H3)} There is a continuous map $\psi:\D[0,T]\times\dbR^n\times\dbR\times\dbR^{1\times n}\times\dbS^n\to U$ such that
\bel{define-psi}\ba{ll}
\ns\ds\BH\big(t,s,x,\psi(t,s,x,\th,p,P),\th,p,P\big)
=\min_{u\in U}\BH(t,s,x,u,\th,p,P),\\
\ns\ds\qq\qq\qq\qq\qq(t,s,x,\th,p,P)\in \D[0,T]\times\dbR^n\times\dbR\times\dbR^{1\times n}\times\dbS^n,\ea\ee
with the properties that the map
$$(t,s,x,\th,p,P)\mapsto\BH\big(t,s,x,\psi(t,s,x,\th,p,P),\th,p,P\big)$$
is continuously differentiable having bounded derivatives in its arguments.

\ms

We admit that the above assumption is restrictive and maybe far more than enough. The problem without the above (H3) is widely open. We prefer not to explore the minimization problem in (H3) here, and leave it to our future investigations.

\ms



%




Now, let $0\les\t<\bar\t<T$ and $\Psi:[\bar\t,T]\times\dbR^n\to U$ be a feedback strategy on $[\bar\t,T]$. We want to extend $\Psi(\cd\,,\cd)$ to $[\t,T]\times\dbR^n$ by means of optimal controls. To this end, we formulate a time-consistent optimal control problem on $[\t,\bar \t]$. For any $t\in[\t,\bar\t]$, $\xi\in\sX_t$, and $u(\cd)\in\sU[t,\bar\t]$, we apply $u(\cd)$ to the system on $[t,\bar\t)$ followed by the feedback strategy $\Psi(\cd\,,\cd)$ on $[\bar\t,T]$. Then the state equation reads (recall \rf{u+Psi})
\bel{state[u+Psi]}\ba{ll}
\ns\ds X(s)=\xi+\int_t^sb\big(r,X(r),(u\oplus\Psi|_{[\bar\t,T]})(r,X(r))\big)dr\\
\ns\ds\qq\qq\qq+\int_t^s\si\big(r,X(r),(u\oplus\Psi|_{[\bar\t,T]})(r,X(r))\big)dW(r),\qq s\in[t,T].\ea\ee
The corresponding recursive cost functional (on $[t,T]$) should be
\bel{cost[u+Psi]}J\big(t,\xi;(u\oplus\Psi|_{[\bar\t,T]})(\cd)\big)=Y(t),\ee
where $(Y(\cd),Z(\cd\,,\cd))$ is the adapted solution to the following BSVIE:
\bel{BSVIE[u+Psi]}\ba{ll}
\ns\ds Y(s)=h(s,X(T))+\int_s^Tg\big(s,r,X(r),(u\oplus\Psi|_{[\bar\t,T]})(r,X(r)),Y(r),Z(s,r)\big)dr\\
\ns\ds\qq\qq\qq\qq\qq-\int_s^TZ(s,r)dW(r),\qq s\in[t,T].\ea\ee
Note that for $s\in[t,\bar\t]$, the state $X(\cd)$ satisfies the following:
\bel{state[u]}X(s)=\xi+\int_t^sb(r,X(r),u(r))dr+\int_t^s\si(r,X(r),u(r)
)dW(r),\q s\in[t,\bar\t],\ee
and the BSVIE \rf{BSVIE[u+Psi]} can be written as
\bel{BSVIE[t,t]}\ba{ll}
\ns\ds Y(s)=h(s,X(T))+\int_{\bar\t}^Tg\big(s,r,X(r),\Psi(r,X(r)),Y(r),Z(s,r)\big)dr-\int_{\bar\t}^T
Z(s,r)dW(r)\\
\ns\ds\qq\qq+\int_s^{\bar\t}g(s,r,X(r),u(r),Y(r),Z(s,r))dr-\int_s^{\bar\t}Z(s,r)dW(r),\qq s\in[t,\bar\t].\ea\ee
It seems to be difficult to write the above \rf{BSVIE[t,t]} as a BSDE on $[t,\bar\t]$ in general. In fact, even if the sum of the first three terms on the right-hand side is $\cF_{\bar\t}\,$-measurable, due to the dependence on $s$ of this sum and the integrand of the fourth term, one at most can get a BSVIE on $[t,\bar\t]$. Consequently, the optimal control problem associated with the state equation \rf{state[u]} and cost functional \rf{cost[u+Psi]} (determined through \rf{BSVIE[t,t]}) is generally time-inconsistent, which could not be handled by the classical dynamic programming approach. Therefore, instead of \rf{BSVIE[u+Psi]}, we introduce the following modified BSVIE:
\bel{BSVIE[u+Psi]m}\ba{ll}
\ns\ds\wt Y(s)=h_\t(s,X(T))+\int_s^Tg_\t\big(s,r,X(r),(u\oplus\Psi|_{[\bar\t,T]})(r,X(r)),\wt Y(r),\wt Z(s,r)\big)dr\\
\ns\ds\qq\qq\qq\qq\qq\qq-\int_s^T\wt Z(s,r)dW(r),\qq s\in[t,T],\ea\ee
where
$$\left\{\1n\ba{ll}
\ds h_\t(s,x)=h(\t,x){\bf1}_{[\t,\bar\t]}(s)+h(s,x){\bf1}_{(\bar\t,T]}(s),\\
\ns\ds g_\t(s,r,x,u,y,z)=g(\t,r,x,u,y,z){\bf1}_{[\t,\bar\t]}(s)
+g(s,r,x,u,y,z){\bf1}_{(\bar\t,T]}(s).\ea\right.$$
Then we define the recursive cost functional for $(t,\xi)\in[\t,\bar\t]\times\sX_t$ as follows:
\bel{J^t}J^\t(t,\xi;u(\cd))=\wt Y(t),\ee
and pose the following optimal control problem.

\ms

\bf Problem (C$^\Psi[\t,\bar\t]$). \rm For given $(t,\xi)\in[\t,\bar\t]\times\sX_t$, find a control $\bar u(\cd)\in\sU[t,\bar\t]$ such that
$$J^\t(t,\xi;\bar u(\cd))=\inf_{u(\cd)\in\sU[t,\bar\t]}J^\t(t,\xi;u(\cd))\equiv V^\t(t,\xi).$$

\ms

The above is referred to as a {\it sophisticated} optimal control problem on $[\t,\bar\t]$. We have the following result concerning the above problem.

\bp{time-consistent} \sl Let $\Th^\Psi(\cd\,,\cd\,,\cd)$ be a classical solution to the following:
\bel{PDE-Th^Psi}\left\{\2n\ba{ll}
\ds\Th^\Psi_s(t,s,x)+\BH\big(t,s,x,\Psi(s,x),\Th^\Psi(s,s,x),\Th^\Psi_x(t,s,x),\Th^\Psi_{xx}
(t,s,x)\big)=0,\\
\ns\ds\qq\qq\qq\qq\qq\qq\qq\qq\qq(t,s,x)\in\D[\bar\t,T]\times\dbR^n,\\
\ns\ds\Th^\Psi(t,T,x)=h(t,x),\qq (t,x)\in[\bar\t,T]\times\dbR^n,\ea\right.\ee
and let $\wt\Th^\Psi(\t,\cd\,,\cd)$ be a classical solution to the following:
\bel{PDE-Th^Psi*}\left\{\2n\ba{ll}
\ds\wt\Th^\Psi_s(\t,s,x)+\BH\big(\t,s,x,\Psi(s,x),\Th^\Psi(s,s,x),\wt\Th^\Psi_x(\t,s,x),
\wt\Th^\Psi_{xx}(\t,s,x)\big)=0,\\
\ns\ds\qq\qq\qq\qq\qq\qq\qq\qq\qq\qq\qq(s,x)\in[\bar\t,T]\times\dbR^n,\\
\ns\ds\wt\Th^\Psi(\t,T,x)=h(\t,x),\q x\in\dbR^n.\ea\right.\ee
Then
\bel{BSDE-wt Y*}\ba{ll}
\ns\ds\wt Y(s)=\wt\Th^\Psi(\t,\bar\t,X(\bar\t))+\int_s^{\bar\t}g\big(\t,r,X(r),u(r),\wt Y(r),
\wt Z(r)\big)dr-\int_s^{\bar\t}\wt Z(r)dW(r),\q s\in[\t,\bar\t].\ea\ee
Consequently, Problem {\rm(C$^\Psi[\t,\bar\t]$)} is time-consistent on $[\t,\bar\t]$.

\ep

\begin{proof} \rm By  \autoref{lemma-BSVIE-approximate-estimate}, we have
\bel{Y_e=Y*}\left\{\1n\ba{ll}
\ds\wt Y(s)=Y(s),\qq\qq\bar\t\les s\les T,\\
\ns\ds\wt Z(s,r)=Z(s,r),\qq\bar\t\les s\les r\les T,\ea\right.\ee
and
\bel{Z_e=Z*}\wt Z(s,r)=\wt Z(r),\qq t\les s\les\bar\t,~s\les r\les T,\ee
with $(\wt Y(\cd),\wt Z(\cd))$ being the adapted solution of the following BSDE:
\bel{BSDE-wt Y*1}\ba{ll}
\ns\ds\wt Y(s)=h(\t,X(T))+\int_s^T\(g\big(\t,r,X(r),\Psi(r,X(r)),Y(r),\wt Z(r)\big){\bf1}_{[\bar\t,T]}(r)\\
\ns\ds\qq\qq\qq+g\big(\t,r,X(r),u(r),\wt Y(r),\wt Z(r)\big){\bf1}_{[\t,\bar\t]}(r)\)dr-\int_s^T\wt Z(r)dW(r),\q s\in[t,\bar\t].\ea\ee
On the other hand, by  \autoref{representation-theorem}, we have the following representation:
\bel{Y=Th*1}Y(s)=\Th^\Psi(s,s,X(s)),~Z(s,r)=\Th^\Psi_x\big(s,r,X(r)\big)\si\big(r,X(r),
\Psi(r,X(r))\big),\q(s,r)\in\D[\bar\t,T],~\as,\ee
where $\Th^\Psi(\cd\,,\cd\,,\cd)$ is the solution to \rf{PDE-Th^Psi}. Thus, on $[\bar\t,T]$, we have the following FBSDE:
\bel{FBSDE[t,T]}\left\{\2n\ba{ll}
\ds X(s)=X(\bar\t\,)+\int_{\bar\t}^sb\big(r,X(r),\Psi(r,X(r))\big)dr+\int_{\bar \t}^s\si\big(r,X(r),\Psi(r,X(r))\big)dW(r),\\
\ns\ds\wt Y(s)=h(\t,X(T))+\int_s^Tg\big(\t,r,X(r),\Psi(r,X(r)),\Th^\Psi(r,r,X(r)),\wt Z(r)\big)dr-\int_s^T\wt Z(r)dW(r).\ea\right.\ee
Then
\bel{}\wt Y(s)=\wt\Th^\Psi(\t,s,X(s)),\q\wt Z(s)=\wt\Th^\Psi_x(\t,s,X(s))\si\big(s,X(s),\Psi(s,X(s))\big),\qq s\in[\bar\t,T],\ee
with $\wt\Th^\Psi(\t,\cd\,,\cd)$ solving \rf{PDE-Th^Psi*}. Hence, in particular,
$$\wt Y(\bar\t)=\wt\Th^\Psi(\t,\bar\t,X(\bar\t)).$$
Then \rf{BSDE-wt Y*} follows, which is a BSDE on $[\t,\bar\t]$. This leads to that Problem (C$^\Psi[\t,\bar\t]$) is a standard optimal control problem with a recursive cost functional. Hence, it is time-consistent on $[\t,\bar\t]$. \end{proof}

Let us make a comment on \rf{PDE-Th^Psi} and \rf{PDE-Th^Psi*}. In the former, $t$ is not fixed since in the equation, both $\Th^\Psi(t,s,x)$ and $\Th^\Psi(s,s,x)$ appear. Whereas, in the latter, $\t$ only plays a role of parameter and it is fixed. Clearly, the former is much difficult than the latter, and their structures are essentially different.

\ms

From \rf{lemma-BSVIE-approximate-estimate-main}, we know that there exists a constant $K>0$, independent of $(t,\xi,u(\cd))$ such that
\bel{|Y-Y|}|\wt Y(t)-Y(t)|\les K(\bar\t-t)^{2},\qq\as,~t\in[\t,\bar\t].\ee
Now, since Problem (C$^\Psi[\t,\bar\t]$) is a standard optimal control problem with a recursive cost functional which is time-consistent, we may use dynamic programming method. Thus, the value function $V^\t(\cd\,,\cd)$ of Problem (C$^\Psi[\t,\bar\t]$) is the unique viscosity solution to the following HJB equation:
\bel{HJB(t)}\left\{\2n\ba{ll}
\ds V^\t_t(t,x)+\inf_{u\in U}\BH\big(\t,t,x,u,V^\t(t,x),V^\t_x(t,x),V^\t_{xx}(t,x)\big)=0,\q(t,x)\in[\t,\bar\t]
\times\dbR^n,\\
\ns\ds V^\t(\bar\t,x)=\Th(\t,\bar\t,x),\qq x\in\dbR^n,\ea\right.\ee
with the Hamiltonian $\BH$ given by \rf{Hamilton}. Further, under the non-degenerate condition, $V^\t(\cd\,,\cd)$ is the unique classical solution to the above HJB equation. Then, we may define
\bel{Psi*}\Psi(t,x)=\psi\big(\t,t,x,V^\t(t,x),V^\t_x(t,x),V^\t_{xx}(t,x)\big),
\qq(t,x)\in[\t,\bar\t)\times\dbR^n.\ee
This extends $\Psi$ from $[\bar\t,T]\times\dbR^n$ to $[\t,T]\times\dbR^n$. For convenience, we refer to the above as the {\it strategy extension procedure} for $\Psi(\cd\,,\cd)$.

\ms

With the above preparation, we now proceed a construction of equilibrium strategies. Let $\t\in[0,T)$ be fixed and let $\Pi\equiv\{t_k~|~0\les k\les N\}$ be a partition of $[\t,T]$ with
\bel{Pi}\t=t_0<t_1<\cds<t_{N-1}<t_N=T,\ee
whose {\it mesh size} $\|\Pi\|$ is defined by
$$\|\Pi\|\deq\ds\max_{0\les i\les N-1}|t_{i+1}-t_i|.$$
For the given partition $\Pi$, denote
\bel{def-pi}\left\{\ba{ll}
\ds\pi^\Pi(t)=\sum_{k=0}^{N-2}t_k{\bf1}_{[t_k,t_{k+1})}+t_{N-1}{\bf1}_{[t_{N-1},t_N]},\\
\ns\ds\wt\pi^\Pi(t)=t_1{\bf1}_{[t_{0},t_1]}+\sum_{k=1}^{N-1}t_{k+1} {\bf1}_{(t_k,t_{k+1}]}.\ea\right.\ee
Following the idea of \cite{Yong 2012}, we now inductively construct a feedback strategy $\Psi^\Pi:[\t,T]\times\dbR^n\to U$ associated with $\Pi$ by means of optimal controls. For any $t\in[t_{N-1},T]$, we first consider the following controlled SDE:
\bel{state(N)}X^N(s)=\xi+\int_{t}^sb(r,X^N(r),u^N(r))dr+\int_{t}^s\si(r,X^N(r),u^N(r))dW(r),\q s\in[t,T],\ee
with the recursive cost functional
\bel{J^N}J^N(t,\xi;u^N(\cd))=Y^N(t),\ee
where $(Y^N(\cd),Z^N(\cd))$ is the adapted solution to the following BSDE:
\bel{BSDE(N)}\ba{ll}
\ns\ds Y^N(s)=h(t_{N-1},X^N(T))+\int_s^Tg\big(t_{N-1},r,X^N(r),u^N(r),Y^N(r),Z^N(r)\big)dr\\
\ns\ds\qq\qq\qq\qq\qq\qq-\int_s^TZ^N(r)dW(r),\qq s\in[t,T].\ea\ee
The optimal control problem associated with the above state equation \rf{state(N)} and recursive cost functional \rf{J^N}--\rf{BSDE(N)} is time-consistent. By dynamic programming approach, under proper conditions, the value function, denoted by, $V^N(\cd\,,\cd)$ is the classical solution to the following HJB equation:
\bel{HJB(N)}\left\{\2n\ba{ll}
\ns\ds V^N_s(s,x)+\inf_{u\in U}\BH\big(t_{N-1},s,x,u,V^N(s,x),V^N_x(s,x),V^N_{xx}(s,x)\big)=0,\q(s,x)\in[t_{N-1},T],\\
\ns\ds V^N(T,x)=h(t_{N-1},x),\qq x\in\dbR^n,\ea\right.\ee
with the Hamiltonian $\BH$ given by \rf{Hamilton}. Then define feedback strategy
\bel{Psi(N)}\Psi^N(s,x)=\psi\big(t_{N-1},s,x,V^N(s,x),V^N_x(s,x),V^N_{xx}(s,x)\big),
\qq(s,x)\in[t_{N-1},T]\times\dbR^n,\ee
whose outcome
$$u^N(s)=\Psi^N(s,X^N(s)),\qq s\in[t_{N-1},T]$$
is an optimal control of the corresponding optimal control problem.

\ms

Next, by the above strategy extension procedure, we obtain an extension $\Psi^{N-1}:[t_{N-2},T]\times\dbR^n\to U$ of $\Psi^N(\cd\,,\cd)$ by the following steps:

\ms

\it Step 1. \rm Solve the following representation PDE parameterized by $t\in[t_{N-1},T]$:
\bel{PDE-FBSDE(N-1)}\left\{\2n\ba{ll}
\ds\Th^{N-1}_s(t,s,x)+\BH\big(t,s,x,\Psi^{N}(s,x),\Th^{N-1}(s,s,x),\Th^{N-1}_x
(t,s,x),\Th^{N-1}_{xx}(t,s,x)\big)=0,\\
\ns\ds\qq\qq\qq\qq\qq\qq\qq\qq\qq\qq\qq(t,s,x)\in\D[t_{N-1},T]\times\dbR^n,\\
\ns\ds\Th^{N-1}(t,T,x)=h(t,x),\q(t,x)\in[t_{N-1},T]\times\dbR^n,\ea\right.\ee
and then solve the following PDE:
\bel{PDE-FBSDE(N-1)*}\left\{\2n\ba{ll}
\ds\wt\Th^{N-1}_s(s,x)+\BH\big(t_{N-2},s,x,\Psi^N(s,x),\Th^{N-1}(s,s,x),\wt\Th^{N-1}_x
(s,x),\wt\Th^{N-1}_{xx}(s,x)\big)=0,\\
\ns\ds\qq\qq\qq\qq\qq\qq\qq\qq\qq\qq\qq\qq(s,x)\in[t_{N-1},T]\times\dbR^n,\\
\ns\ds\wt\Th^{N-1}(T,x)=h(t_{N-2},x),\qq x\in\dbR^n.\ea\right.\ee

\it Step 2. \rm Solve the following HJB equation:
\bel{HJB(N-1)}\left\{\2n\ba{ll}
\ds V^{N-1}_s(s,x)+\inf_{u\in U}\BH\big(t_{N-2},s,x,u,V^{N-1}(s,x),V^{N-1}_x(s,x),V^{N-1}_{xx}(s,x)\big)=0,\\
\ns\ds\qq\qq\qq\qq\qq\qq\qq\qq\qq\qq(s,x)\in[t_{N-2},t_{N-1}]\times\dbR^n,\\
\ns\ds V^{N-1}(t_{N-1},x)=\wt\Th^{N-1}(t_{N-1},x),\qq x\in\dbR^n,\ea\right.\ee
with the Hamiltonian $\BH$ given by \rf{Hamilton}, assuming that the classical solution $V^{N-1}(\cd\,,\cd)$ exists.

\ms

\it Step 3. \rm Define
\bel{Psi(N-1)}\Psi^{N-1}(s,x)\1n=\1n\left\{\2n\ba{ll}
\ds\Psi^N(s,x),\qq\qq\qq\qq\qq\qq\qq\qq\qq(s,x)\in[t_{N-1},T]\times\dbR^n,\\
\ns\ds\psi\big(t_{N-2},s,x,V^{N-1}(s,x),V^{N-1}_x(s,x),
V^{N-1}_{xx}(s,x)\big),\q(s,x)\1n\in\1n[t_{N-2},t_{N-1})\1n\times\1n\dbR^n.\ea\right.\ee
By verification theorem (\autoref{veri-thm}), we know that the outcome
$$u^{N-1}(s)=\Psi^{N-1}(s,X^{\Psi^{N-1}}(s)),\qq s\in[t_{N-2},t_{N-1}]$$
of the feedback strategy $\Psi^{N-1}(\cd\,,\cd)$ is the optimal control for the corresponding sophisticated optimal control problem on $[t_{N-2},t_{N-1}]$.

\ms

Now, suppose $\Psi^{k+1}(\cd\,,\cd)$ has been constructed on $[t_k,T]\times\dbR^n$, (for some $k=1,2,\cds,N-1$). We apply the above strategy extension procedure to obtain an extension $\Psi^k:[t_{k-1},T]\times\dbR^n\to U$ of $\Psi^{k+1}(\cd\,,\cd)$ by the following steps:

\ms

\it Step 1. \rm Solve the following representation PDE parameterized by $t\in[t_k,T]$:
\bel{PDE-FBSDE(k)}\left\{\2n\ba{ll}
\ds\Th^{k}_s(t,s,x)+\BH\big(t,s,x,\Psi^{k+1}(s,x),\Th^{k}(s,s,x),\Th^{k}_x
(t,s,x),\Th^{k}_{xx}(t,s,x)\big)=0,\\
\ns\ds\qq\qq\qq\qq\qq\qq\qq\qq\qq\qq(t,s,x)\in\D[t_k,T]\times\dbR^n,\\
\ns\ds\Th^{k}(t,T,x)=h(t,x),\q(t,x)\in[t_k,T]\times\dbR^n,\ea\right.\ee
and then solve the following PDE:
\bel{PDE-FBSDE(k)*}\left\{\2n\ba{ll}
\ds\wt\Th^{k}_s(s,x)\1n+\1n\BH\big(t_{k-1},s,x,\Psi^{k+1}(s,x),\Th^{k}(s,s,x),\wt\Th^{k}_x
(s,x),\wt\Th^{k}_{xx}(s,x)\big)\1n=\1n0,\\
\ns\ds\qq\qq\qq\qq\qq\qq\qq\qq\qq\qq\qq(s,x)\in[t_k,T]\times\dbR^n,\\
\ns\ds\wt\Th^{k}(T,x)=h(t_{k-1},x),\qq x\in\dbR^n.\ea\right.\ee

\it Step 2. \rm Solve the following HJB equation:
\bel{HJB(k)}\left\{\2n\ba{ll}
\ds V^k_s(s,x)+\inf_{u\in U}\BH\big(t_{k-1},s,x,u,V^k(s,x),V^k_x(s,x),V^k_{xx}(s,x)\big)=0,\q(s,x)\in
[t_{k-1},t_k]\times\dbR^n,\\
\ns\ds V^k(t_k,x)=\wt\Th^{k}(t_k,x),\qq x\in\dbR^n,\ea\right.\ee
with the Hamiltonian $\BH$ given by \rf{Hamilton}, again, assuming the classical solution $V^k(\cd\,,\cd)$ exists.

\ms

\it Step 3. \rm Define
\bel{Psi(k)}\Psi^k(s,x)=\left\{\2n\ba{ll}
\ds\Psi^{k+1}(s,x),\qq\qq\qq\qq\qq\qq\qq\qq(s,x)\in[t_k,T]\times\dbR^n,\\
\ns\ds\psi\big(t_{k-1},s,x,V^k(s,x),V^k_x(s,x),V^k_{xx}(s,x)\big),
\qq\q(s,x)\in[t_{k-1},t_k)\times\dbR^n.\ea\right.\ee
The same as above, the outcome
$$u^k(s)=\Psi^k(s,X^{\Psi^k}(s)),\qq s\in[t_{k-1},t_k],$$
of the feedback strategy $\Psi^k(\cd\,,\cd)$ is an optimal control of the corresponding sophisticated optimal control on $[t_{k-1},t_k]$.

\ms

This completes the induction.

\ms

It is seen that $\Psi^0:[\t,T]\times\dbR^n\to U$ is a feedback
strategy whose outcome $u^k(\cd)$ on $[t_{k-1},t_k]$ is optimal for the corresponding sophisticated problem on $[t_{k-1},t_k]$. We call the above constructed $\Psi^0(\cd,\cd)$ (which is determined by partition $\Pi$) an {\it equilibrium strategy} associated with $\Pi$.
Our next goal is to obtain the limit as $\|\Pi\|\to0$.

\ms



To get the right ansatz, let us make an observation. Once $\Psi^k(s,x)$ is defined for $(s,x)\in[t_{k-1},T]\times\dbR^n$, we may extend
$\Th^{k}(t,s,x)$ and
$\wt\Th^{k}(s,x)$ as follows:
\bel{PDE-FBSDE(k)-}\left\{\2n\ba{ll}
\ds\Th^{k}_s(t,s,x)+\BH\big(t,s,x,\Psi^k(s,x),\Th^{k}(s,s,x),\Th^{k}_x
(t,s,x),\Th^{k}_{xx}(t,s,x)\big)=0,\\
\ns\ds\qq\qq\qq\qq\qq\qq\qq\qq\qq\qq(t,s,x)\in\D[t_{k-1},T]\times\dbR^n,\\
\ns\ds\Th^{k}(t,T,x)=h(t,x),\q(t,x)\in[t_{k-1},T]\times\dbR^n,\ea\right.\ee
and
\bel{PDE-FBSDE(k)*-}\left\{\2n\ba{ll}
\ds\wt\Th^{k}_s(s,x)\1n+\1n\BH\big(t_{k-1},s,x,\Psi^k(s,x),\Th^{k}(s,s,x),\wt\Th^{k}_x
(s,x),\wt\Th^{k}_{xx}(s,x)\big)\1n=\1n0,\q(s,x)\in[t_{k},T]\times\dbR^n,\\ [1mm]
\ds\wt\Th^{k}_s(s,x)\1n+\1n\BH\big(t_{k-1},s,x,\Psi^k(s,x),\wt\Th^{k}(s,x),\wt\Th^{k}_x
(s,x),\wt\Th^{k}_{xx}(s,x)\big)\1n=\1n0,\q(s,x)\in[t_{k-1},t_k)\times\dbR^n,\\ [1mm]
\ns\ds\wt\Th^{k}(T,x)=h(t_{k-1},x),\qq x\in\dbR^n.\ea\right.\ee
%
Then
$$V^k(s,x)=\wt\Th^{k}(s,x),\qq s\in[t_{k-1},t_k),\q x\in\dbR^n.$$
Now, for any given partition $\Pi$ of $[\t,T]$, we define
\bel{Th-Pi-tau-h-g}
\left\{\begin{aligned}
&\Th^\Pi(t,s,x)=\Th^{k}(t,s,x),\q(t,s,x)\in\D[t_{k},T]\times\dbR^n,\q k=0,1,...,N-1,\\
&\wt\Th^\Pi(t,s,x)=\sum_{k=1}^{N} \wt\Th^{k}(s,x){\bf1}_{[t_{k-1},t_k)}(t),\q (t,s,x)\in\D[\t,T]\times\dbR^n,\\
&h^\Pi(t,x)=\sum_{k=1}^N h(t_{k-1},x){\bf1}_{[t_{k-1},t_k)}(t),\q (t,x)\in[\t,T]\times\dbR^n,\\
&g^\Pi(t,s,x,u,y,z)=\sum_{k=1}^N g(t_{k-1},s,x,u,y,z){\bf1}_{[t_{k-1},t_k)}(t),\\
&\qq\qq\qq\qq (t,s,x,u,y,z)\in\D[\t,T]\times\dbR^n\times U\times\dbR\times\dbR^{1\times d}.
\end{aligned}\right.
\ee
Note that
$$
\Th^{j}(t,s,x)=\Th^{k}(t,s,x),\q(t,s,x)\in\D[t_{k},T]\times\dbR^n,\q 1\les j\les k\les N.$$
Thus $\Th^\Pi(\cd,\cd,\cd)$ is well-defined.
Then $\Th^\Pi(\cd,\cd,\cd)$ and $\wt\Th^\Pi(\cd,\cd,\cd)$ satisfy the following PDEs:
\bel{Th-Pi}\left\{\2n\ba{ll}
\ns\ds\Th^\Pi_s(t,s,x)+\BH\big(t,s,x,\Psi^\Pi(s,x),\Th^\Pi(s,s,x),\Th^\Pi_x(t,s,x),
\Th^\Pi_{xx}(t,s,x)\big)=0,\\
\ns\ds\qq\qq\qq\qq\qq\qq\qq\qq\qq\qq(t,s,x)\in\D[\t,T]\times\dbR^n,\\
\ns\ds\Th^\Pi(t,T,x)=h(t,x),\q (t,x)\in[\t,T]\times\dbR^n,\ea\right.\ee
and
\bel{Th-Pi-tau}
\left\{\begin{aligned}
& \wt\Th^{\Pi}_s(t,s,x)+\BH^\Pi\big(t,s,x,\Psi^\Pi(s,x),\Th^\Pi(s,s,x),\wt\Th^\Pi_x(t,s,x),\wt\Th^\Pi_{xx}(t,s,x)\big)=0,\\
&\qq\qq\qq\qq\qq\qq\qq\qq\qq t\in [\t,T],\,s\in[\ti\pi^\Pi(t),T],\,x\in\dbR^n,\\
& \wt\Th^{\Pi}_s(t,s,x)+\BH^\Pi\big(t,s,x,\Psi^\Pi(s,x), \wt\Th^\Pi(t,s,x),\wt\Th^\Pi_x(t,s,x),\wt\Th^\Pi_{xx}(t,s,x)\big)=0,\\
&\qq\qq\qq\qq\qq\qq\qq\qq\qq  t\in [\t,T],\,s\in[t,\ti\pi^\Pi(t)]\,,x\in\dbR^n,\\
& \wt\Th^{\Pi}(t,T,x)=h^\Pi(t,x),\q (t,x)\in[\t,T]\times\dbR^n,
\end{aligned}\right.\ee
where
\bel{H-Pi}\ba{ll}
\ns\ds\BH^\Pi(t,s,x,u,\th,p,P)={1\over 2}\tr[P\si(s,x,u)\si(s,x,u)^\top]+p b(s,x,u)+g^\Pi(t,s,x,u,\th,p\si(s,x,u)),\\
\ns\ds\qq\qq\qq\qq\qq\qq(t,s,x,u,\th,p,P)\in \D[\t,T]\times\dbR^n\times U\times\dbR\times\dbR^{1\times n}\times\dbS^n,\ea\ee
and
\bel{Psi-Pi}\Psi^\Pi(s,x)=\psi(\pi^\Pi(s),s,x,\wt\Th^\Pi(s,s,x),\wt\Th^\Pi_x(s,s,x),
\wt\Th^\Pi_{xx}(s,s,x)),\q (s,x)\in[\t,T]\times\dbR^n.\ee
Note that
\bel{h-h}|h^\Pi(t,x)-h(t,x)|=\sum_{k=1}^N|h(t_{k-1},x)-h(t,x)|{\bf1}_{[t_{k-1},t_k)}(t)
\les L\|\Pi\|,\ee
and likewise,
\bel{H-H}\ba{ll}
\ns\ds|\BH^\Pi(t,s,x,u,\th,p,P)-\BH(t,s,x,u,\th,p,P)|\\
\ns\ds=|g^\Pi(t,s,x,u,\th,p\si(s,x,u))-g(t,s,x,u,\th,p\si(s,x,u))|\les L\|\Pi\|.\ea\ee
Therefore, at the limit, one should have
\bel{PDE-FBSDE(limit)}\left\{\2n\ba{ll}
\ds\Th_s(t,s,x)\1n+\1n\BH\big(t,s,x,\Psi(s,x),\Th(s,s,x),\Th_x(t,s,x),\Th_{xx}(t,s,x)\big)=0,
\q(t,s,x)\1n\in\1n\D[\t,T]\1n\times\1n\dbR^n,\\
\ns\ds\Th(t,T,x)=h(t,x),\q(t,x)\in[\t,T]\times\dbR^n,\ea\right.\ee
\bel{tiPDE-FBSDE(limit)}\left\{\2n\ba{ll}
\ds\wt\Th_s(t,s,x)\1n+\1n\BH\big(t,s,x,\Psi(s,x),\Th(s,s,x),\wt\Th_x(t,s,x),\wt\Th_{xx}(t,s,x)\big)=0,
\q(t,s,x)\1n\in\1n\D[\t,T]\1n\times\1n\dbR^n,\\
\ns\ds\Th(t,T,x)=h(t,x),\q(t,x)\in[\t,T]\times\dbR^n,\ea\right.\ee
\bel{V(limit)}V(s,x)=\wt\Th(s,s,x),\qq(s,x)\in[\t,T]\times\dbR^n,\ee
and
\bel{Psi(limit)}\Psi(s,x)=\psi\big(s,s,x,\wt\Th(s,s,x),\wt\Th_x(s,s,x),\wt\Th_{xx}(s,s,x)\big),\qq(s,x)\in
[\t,T]\times\dbR^n.\ee
Let \rf{PDE-FBSDE(limit)} admit a unique classical solution. Then
\bel{ti-Th-Th}
\wt\Th(t,s,x)=\Th(t,s,x),\q (t,s,x)\in\D[\t,T]\times\dbR^n,
\ee
and thus
\begin{align}
\label{V(limit)-Th-Psi1}&V(s,x)=\wt\Th(s,s,x)=\Th(s,s,x),\qq(s,x)\in[\t,T]\times\dbR^n,\\
\label{V(limit)-Th-Psi}&\Psi(s,x)=\psi\big(s,s,x,\Th(s,s,x),\Th_x(s,s,x),\Th_{xx}(s,s,x)\big)
\qq(s,x)\in[\t,T]\times\dbR^n.
\end{align}

\ms

We call \rf{PDE-FBSDE(limit)} the {\it equilibrium HJB equation} of Problem (N), and $V(\cd,\cd)$ the {\it equilibrium value function} of Problem (N). The map $\Psi(\cd,\cd)$ defined by \rf{V(limit)-Th-Psi} is a {\it feedback strategy} of Problem (N) provided that \rf{PDE-FBSDE(limit)} has a solution with good regularities. We will show that the feedback strategy $\Psi(\cd,\cd)$ is an {\it equilibrium strategy} of Problem (N) in the next section. Note that the equilibrium HJB equation \rf{PDE-FBSDE(limit)} can actually be written as follows:
\bel{Th-differential-form-rewrite}\left\{\2n\ba{ll}
\ds\Th_s(t,s,x)+{1\over2}\tr\[\Th_{xx}(t,s,x)a\big(s,x,\psi(s,s,x,\Th(s,s,x),\Th_x(s,s,x),
\Th_{xx}(s,s,x))\big)\]\\
\ns\ds\q+ \Th_x(t,s,x)b\big(s,x,\psi(s,s,x,\Th(s,s,x),\Th_x(s,s,x),\Th_{xx}(s,s,x))\big)\\
\ns\ds\q+g\big(t,s,x,\psi(s,s,x,\Th(s,s,x),\Th_x(s,s,x),\Th_{xx}(s,s,x)),\Th(s,s,x),\\
\ns\ds\qq\qq\Th_x(t,s,x)\si(s,x,\psi(s,s,x,\Th(s,s,x),\Th_x(s,s,x),\Th_{xx}(s,s,x)))\big)=0,\\
\ns\ds\qq\qq\qq\qq\qq\qq\qq\qq\qq\qq\qq\q(t,s,x)\in\D[\t,T]\times\dbR^n,\\
\ns\ds\Th(t,T,x)=h(t,x),\q (t,x)\in[\t,T]\times\dbR^n,\ea\right.\ee
where
\bel{def-a}
a(s,x,u)\deq\si(s,x,u)\si(s,x,u)^\top,\q (s,x,u)\in[\t,T]\times\dbR^n\times U.
\ee
%
Taking the feedback strategy $\Psi(\cd\,,\cd)$ defined by \rf{V(limit)-Th-Psi}, the corresponding closed-loop system is
\bel{equilibrium-system}\left\{\begin{aligned}
\bar X(s) &=\xi+\int_\t^sb\big(r,\bar X(r),\Psi(r,\bar X(r))\big)dr+\int_\t^s\si\big(r,\bar X(r),\Psi(r,\bar X(r))\big)dW(r),\qq s\in[\t,T],\\
\bar Y(s) &=h(s,\bar X(T))+\int_s^T g\big(s,r,\bar X(r),\Psi(r,\bar X(r)),\bar Y(r),\bar Z(s,r)\big)dr\\
  &\qq\qq\qq\qq\qq -\int_s^T \bar Z(s,r)dW(r),\qq s\in[\t,T].
\end{aligned}\right.\ee
The equilibrium value function is
\bel{bar Y}V(s,\bar X(s))=\Th(s,s,\bar X(s))=\bar Y(s),\q s\in[\t,T].\ee
Also,
\bel{bar Z}\bar Z(t,s)=\Th_x(t,s,\bar X(s))\si\big(s,\bar X(s),\Psi(s,\bar X(s))\big),\qq (t,s)\in\D[\t,T].\ee
We see that \rf{bar Y}--\rf{bar Z} exhibits an interesting relationship between equilibrium HJB equation \rf{Th-differential-form-rewrite} and coupled FSDE and BSVIE \rf{equilibrium-system}. We will explore more about this in  \autoref{BSVIEs}.

\section{Verification Theorem and Well-Posedness of Equilibrium HJB Equation}\label{Verification Theorem}
For any partition $\Pi$, we have constructed an {\it approximate equilibrium strategy} $\Psi^\Pi(\cd,\cd)$ of Problem (N). Taking the limit $\ds\lim_{\|\Pi\|\to 0}\Psi^\Pi(\cd,\cd)$, we have formally obtained the feedback strategy $\Psi(\cd,\cd)$.
In this section, we would like to show that $\Psi(\cd,\cd)$   is  an equilibrium
strategy of Problem (N) in the sense of \autoref{equilibrium strategy}.
Such a result can be viewed as a verification theorem for our constructed strategy $\Psi(\cd,\cd)$.

\ms

In order to show the local optimality of the feedback strategy $\Psi(\cd,\cd)$,
we assume that the equilibrium HJB equation \rf{Th-differential-form-rewrite} admits a unique smooth solution. We also assume that all the involved functions are bounded and differentiable with bounded derivatives.
For any fixed $ t \in[0,T)$  and $\e>0$ small with $t+\e\les T$, we consider Problem (C$^\Psi[t,t+\e]$).
Then by \autoref{time-consistent}, the state equation and the cost functional of Problem (C$^\Psi[t,t+\e]$) can be given by
\bel{state-t-t+e}
X(s)=\xi+\int_t^sb(r,X(r),u(r))dr+\int_t^s\si(r,X(r),u(r))dW(r),\q s\in[t,t+\e],
\ee
and
\bel{J-t}
J^t(t,\xi;u(\cd))=\wt Y(t),
\ee
with $(\wt Y(\cd),\wt Z(\cd))$ being the adapted solution of the following BSDE:
\bel{BSDE-wtY-t+e}\ba{ll}
\ns\ds\wt Y(s)=\Th(t,t+\e,X(t+\e))+\int_s^{t+\e}g\big(t,r,X(r),u(r),\wt Y(r),
\wt Z(r)\big)dr\\
\ns\ds\qq\qq\qq\qq\qq\qq\qq-\int_s^{t+\e}\wt Z(r)dW(r),\q s\in[t,t+\e].
\ea\ee
Note that Problem (C$^\Psi[t,t+\e]$) is a classical recursive stochastic optimal control problem and thus is time consistent.
Let $\Th^\e(t,\cd,\cd)$ be the unique classical solution of the following HJB equation:
\bel{PDE-Th-e}\left\{\2n\ba{ll}
\ds\Th^\e_s(t,s,x)\1n+\1n\BH\big(t,s,x,\Psi^\e(s,x),\Th^\e(t,s,x),\Th^\e_x(t,s,x),
\Th^\e_{xx}(t,s,x)\big)=0,
\q(s,x)\1n\in\1n[t,t+\e]\1n\times\1n\dbR^n,\\
\ns\ds\Th^\e(t,t+\e,x)=\Th(t,t+\e,x),\q x\in\dbR^n,\ea\right.\ee
where
\bel{Psi-e}
\Psi^\e(s,x)=\psi\big(t,s,x,\Th^\e(t,s,x),\Th^\e_x(t,s,x),\Th^\e_{xx}(t,s,x)\big),\q(s,x)\1n\in\1n[t,t+\e]\1n\times\1n\dbR^n.
\ee
By \autoref{veri-thm}, the outcome $u^\e(\cd)=\Psi^\e(\cd,X^\e(\cd))$ of strategy $\Psi^\e(\cd,\cd)$ is an optimal control of Problem (C$^\Psi[t,t+\e]$);
that is
\bel{J-t-Psit}
J^t(t,\xi;\Psi^\e(\cd,\cd))=\inf_{u(\cd)\in\sU[t,t+\e]}J^t(t,\xi;u(\cd)).
\ee
Note that $\Th(\cd,\cd,\cd)$ satisfies the following equation on $\D[t,t+\e]$:
\bel{PDE-Th-t-t+e}\3n\left\{\2n\ba{ll}
\ds\Th_s( \t,s,x)\1n+\1n\BH\big(\t,s,x,\Psi(s,x),\Th(s,s,x),\Th_x(\t,s,x),\Th_{xx}(\t,s,x)
\big)\1n=\1n0,~(\t,s,x)\1n\in\1n\D[t,t\1n+\1n\e]\1n\times\1n\dbR^n,\\
\ns\ds\Th(\t,t+\e,x)=\Th(\t,t+\e,x),\q(\t,x)\in[t,t+\e]\times\dbR^n,\ea\right.\ee
where
\bel{Psi-t-t+e}
\Psi(s,x)=\psi\big(s,s,x,\Th(s,s,x),\Th_x(s,s,x),\Th_{xx}(s,s,x)\big),\q(s,x)\1n\in\1n
[t,t+\e]\1n\times\1n\dbR^n.\ee
We observe that the $\BH$-term of \rf{PDE-Th-t-t+e} depends on the diagonal values $\Th(s,s,x),\Th_x(s,s,x),\Th_{xx}(s,s,x)$ of the unknown variables and the $\BH$-term of \rf{PDE-Th-e} depends on
$\Th^\e(t,s,x),\Th^\e_x(t,s,x),\Th^\e_{xx}(t,s,x)$. Thus \rf{PDE-Th-t-t+e} is a non-local PDE and  \rf{PDE-Th-e} is a classical PDE with the parameter $t$.
Then in general, we do not have
\bel{Th-The-eq}
\Th^\e(t,s,x)=\Th(t,s,x),\q(s,x)\1n\in\1n[t,t+\e]\1n\times\1n\dbR^n.
\ee
Therefore, there are some gaps in the proofs of the verification theorem in \cite{Wei-Yong-Yu 2017,Mei-Yong 2019}.
In fact, if  \rf{Th-The-eq} holds,  by \rf{PDE-Th-e}--\rf{PDE-Th-t-t+e}, we get
\begin{align*}
&\BH\big(t,s,x,\Psi^\e(s,x),\Th^\e(t,s,x),\Th^\e_x(t,s,x),\Th^\e_{xx}(t,s,x)\big)\\
&\q=\BH\big(t,s,x,\Psi(s,x),\Th(s,s,x),\Th_x(t,s,x),\Th_{xx}(t,s,x)\big),\q(s,x)\in[t,t+\e]\times\dbR^n.
\end{align*}
Then the following should hold true:
$$
\Th^\e(t,s,x)=\Th(s,s,x),\q(s,x)\1n\in\1n[t,t+\e]\1n\times\1n\dbR^n.
$$
Combining the above with \rf{Th-The-eq} yields that
\bel{Th-Th-eq}
\Th(t,s,x)=\Th(s,s,x),\q(s,x)\1n\in\1n[t,t+\e]\1n\times\1n\dbR^n.
\ee
Since the $\BH$-term of \rf{PDE-Th-t-t+e} depends on $\t;\,t\les\t\les t+\e$, the above equality usually fails.
Therefore, we do not have \rf{Th-The-eq} in general.

\ms
By \autoref{lemma-BSVIE-approximate-estimate}, we know that there exists a constant $K>0$, independent of $(t,\xi,u(\cd))$ such that
\bel{J-t-J}
\big|J^t(t,\xi;u(\cd))-J(t,\xi;u\oplus\Psi|_{[t+\e,T]})\big|\les K\e^{2}.
\ee
Combining the above with \rf{J-t-Psit}, we have
\bel{J-Psit}
J(t,\xi;\Psi^\e\oplus\Psi|_{[t+\e,T]})\les J(t,\xi;u\oplus\Psi|_{[t+\e,T]})
+o(\e),\qq\forall u(\cd)\in\sU[t,t+\e],\ee
where $o(\e)$ is uniform in $u(\cd)\in\sU[t,t+\e]$.
Thus $(\Psi^\e\oplus\Psi|_{[t+\e,T]})(\cd,\cd)$ satisfies the local near-optimality \rf{near-optim}.
Next, we would like to show that $\Psi(\cd,\cd)$ satisfies the local optimality condition \rf{near-optim*} under the following assumption:

\ms

{\bf(H4)} There exists a nondecreasing continuous function $\wt\rho:[0,\i)\to[0,\i)$ with $\wt\rho(0)=0$ such that
$$\ba{ll}
\ns\ds|\Th^\e(t,s,x)-\Th(t,s,x)|+|\Th_x^\e(t,s,x)-\Th_x(t,s,x)|+|\Th_{xx}^\e(t,s,x)-\Th_{xx}(t,s,x)|\les \wt\rho(\e)(1+|x|),\\
\ns\ds\qq\qq\qq\qq\qq\qq\qq\qq\qq\qq\forall(s,x)\in[t,t+\e]\times\dbR^n,\ea$$
where $\Th(\cd,\cd,\cd)$ and $\Th^\e(\cd,\cd,\cd)$ are the classical solutions of  the PDE \rf{PDE-Th-t-t+e} and \rf{PDE-Th-e}, respectively.

\ms

Under (H4), by the definitions   of $\Psi^\e(\cd,\cd)$  and $\Psi(\cd,\cd)$, we have
\bel{Psi-e-Psi}
\begin{aligned}
&\q|\Psi^\e(s,x)-\Psi(s,x)|\\
&=\big|\psi\big(t,s,x,\Th^\e(t,s,x),\Th^\e_x(t,s,x),\Th^\e_{xx}(t,s,x)\big)-\psi\big(s,s,x,\Th(s,s,x),\Th_x(s,s,x),\Th_{xx}(s,s,x)\big)\big|\\
&\les \big|\psi\big(t,s,x,\Th^\e(t,s,x),\Th^\e_x(t,s,x),\Th^\e_{xx}(t,s,x)\big)-\psi\big(t,s,x,\Th(t,s,x),\Th_x(t,s,x),\Th_{xx}(t,s,x)\big)\big|\\
&\q+\big|\psi\big(t,s,x,\Th(t,s,x),\Th_x(t,s,x),\Th_{xx}(t,s,x)\big)-\psi\big(s,s,x,\Th(s,s,x),\Th_x(s,s,x),\Th_{xx}(s,s,x)\big)\big|\\
&\les K\wt\rho(\e)(1+|x|)+K|s-t|\les K[\wt\rho(\e)+\e](1+|x|)\deq\rho(\e)(1+|x|),\q (s,x)\in[t,t+\e]\times\dbR^n.
\end{aligned}
\ee
%
Let $(X(\cd),\wt Y(\cd),\wt Z(\cd))\equiv(X^\Psi(\cd),\wt Y^\Psi(\cd),\wt Z^\Psi(\cd))$ and $(X^\e(\cd),\wt Y^\e(\cd),\wt Z^\e(\cd))\equiv (X^{\Psi^\e}(\cd),\wt Y^{\Psi^\e}(\cd),\wt Z^{\Psi^\e}(\cd))$ be the unique solutions to the controlled SDE \rf{state-t-t+e}
and BSDE \rf{BSDE-wtY-t+e} corresponding to the feedback strategies $\Psi(\cd,\cd)$ and $\Psi^\e(\cd,\cd)$, respectively.
By \autoref{lmm:well-posedness-SDE} and \rf{Psi-e-Psi}, we have
\bel{X-e-est1}
\begin{aligned}
\dbE_t\(\sup_{t\les s\les t+\e}|X^\e(s)|^2\)&\les K\dbE_t\Big\{1+|\xi|^2+ \int_t^{t+\e}\big|\Psi^\e(r,X^\e(r))\big|^2dr\Big\}\\
&\les K\dbE_t\Big\{1+|\xi|^2+ \int_t^{t+\e}\big[|\Psi(r,X^\e(r))|^2+\rho(\e)^2(1+|X^\e(r)|^2)\big]dr\Big\}\\
&\les K\dbE_t\Big\{1+|\xi|^2+ \int_t^{t+\e}\big[1+|X^\e(r)|^2+\rho(T-t)^2(1+|X^\e(r)|^2)\big]dr\Big\}\\
&\les K\dbE_t\Big\{1+|\xi|^2+ \int_t^{t+\e}|X^\e(r)|^2dr\Big\}.
\end{aligned}
\ee
Then by Gr\"{o}nwall's inequality, there exists a constant $K>0$, independent of $(t,\e)$ such that
\bel{X-e-est}
\dbE_t\(\sup_{t\les s\les t+\e}|X^\e(s)|^2\)\les  K\dbE_t[1+|\xi|^2]=K(1+|\xi|^2).
\ee
Combining the above with the standard estimate of SDEs, we have
\bel{X-e-X-est}
\begin{aligned}
&\dbE_t\(\sup_{t\les s\les t+\e}|X^\e(s)-X(s)|^2\)\\
&\q\les K\dbE_t\Big\{ \int_t^{t+\e}\big|b\big(r,X(r),\Psi^\e(r,X^\e(r))\big)-b\big(r,X(r),\Psi(r,X^\e(r))\big)\big|^2dr\\
&\qq\qq+\int_t^{t+\e}\big|\si\big(r,X(r),\Psi^\e(r,X^\e(r))\big)-\si\big(r,X(r),\Psi(r,X^\e(r))\big)\big|^2dr\Big\}\\
&\q\les K\dbE_t\Big\{\int_t^{t+\e}\rho(\e)^2(1+|X^\e(r)|^2)dr\Big\}\les  K\e\rho(\e)^2(1+|\xi|^2).
\end{aligned}
\ee
In particular,
\bel{X-e-X-t-est}
\dbE_t|X^\e(t+\e)-X(t+\e)|^2\les  K\e\rho(\e)^2(1+|\xi|^2).
\ee
Let us recall \rf{BSDE-wtY-t+e}. Then  by the standard estimate of BSDEs  and \rf{X-e-X-est}--\rf{X-e-X-t-est}, we have
\bel{Y-e-Y-est}
\begin{aligned}
&\dbE_t\[\sup_{t\les s\les t+\e}|\wt Y^\e(s)-\wt Y(s)|^2\]+\dbE_t\[\int_t^{t+\e}|\wt Z^\e(r)-\wt Z(r)|^2dr\]\\
&\q\les K\dbE_t\[|\Th(t,t+\e,X^\e(t+\e))-\Th(t,t+\e,X(t+\e))|^2\]\\
&\qq+ K\dbE_t\Big[\int_t^{t+\e}\big|g\big(t,r,X^\e(r),\Psi^\e(r,X^\e(r)),\wt Y(r),\wt Z(r)\big)\\
&\qq\qq\qq\qq-g\big(t,r,X(r),\Psi(r,X(r)),\wt Y(r),\wt Z(r)\big)\big|dr\Big]^2\\
&\q\les  K\e\rho(\e)^2(1+|\xi|^2)+K\e\dbE_t\int_t^{t+\e}\Big[|X^\e(r)-X(r)|^2+|\Psi^\e(r,X^\e(r))-\Psi(r,X^\e(r))|^2\\
&\qq\qq\qq\qq\qq\qq\qq\qq\qq+|\Psi(r,X^\e(r))-\Psi(r,X(r))|^2\Big]dr\\
&\q\les  K\e\rho(\e)^2(1+|\xi|^2)+K\e\dbE_t\int_t^{t+\e}\big[|X^\e(r)-X(r)|^2+|\Psi^\e(r,X^\e(r))-\Psi(r,X^\e(r))|^2\big]dr\\
&\q\les  K\e\rho(\e)^2(1+|\xi|^2)+ K\e^2\rho(\e)^2(1+|\xi|^2)+K\e\dbE_t\int_t^{t+\e}\rho(\e)^2\big(1+|X^\e(r)|^2\big)dr\\
&\q\les K\e\rho(\e)^2(1+|\xi|^2).
\end{aligned}
\ee
%
%
Observe that
$$
\begin{aligned}
&|\wt Y^\e(t)-\wt Y(t)|^2= \Big|\dbE_t\Big\{\Th(t,t+\e,X^\e(t+\e))-\Th(t,t+\e,X(t+\e))\\
&\qq\qq\qq\qq+\int_t^{t+\e}\[g\big(t,r,X^\e(r),\Psi^\e(r,X^\e(r)),\wt Y^\e(r),\wt Z^\e(r)\big)\\
&\qq\qq\qq\qq\qq\q- g\big(t,r,X(r),\Psi(r,X(r)),\wt Y(r),\wt Z(r)\big)\]dr\Big\}\Big|^2.
\end{aligned}
$$
Then by H\"{o}lder inequality and \rf{Psi-e-Psi}--\rf{X-e-X-est}--\rf{Y-e-Y-est}, we have
\bel{Y-Y-e1}
\begin{aligned}
|\wt Y^\e(t)-\wt Y(t)|^2&\les K\big|\dbE_t\big[\Th(t,t+\e,X^\e(t+\e))-\Th(t,t+\e,X(t+\e))\big] \big|^2\\
&\q+ K\e\dbE_t\int_t^{t+\e}\big|g\big(t,r,X^\e(r),\Psi^\e(r,X^\e(r)),\wt Y^\e(r),\wt Z^\e(r)\big)\\
&\qq\qq\qq\q- g\big(t,r,X(r),\Psi(r,X(r)),\wt Y(r),\wt Z(r)\big)\big|^2dr\\
&\les K\big|\dbE_t\big[\Th(t,t+\e,X^\e(t+\e))-\Th(t,t+\e,X(t+\e))\big] \big|^2\\
&\q+ K\e\dbE_t\int_t^{t+\e}\big[|X(r)-X^\e(r)|^2+|\Psi^\e(r,X^\e(r))-\Psi(r,X^\e(r))|^2\\
&\qq\qq\qq\qq+|\wt Y^\e(r)-\wt Y(r)|^2+|\wt Z^\e(r)-\wt Z(r)|^2\big]dr\\
&\les K\big|\dbE_t\big[\Th(t,t+\e,X^\e(t+\e))-\Th(t,t+\e,X(t+\e))\big] \big|^2+K\e^2\rho(\e)^2(1+|\xi|^2).
\end{aligned}
\ee
Applying It\^{o} formula to $s\mapsto \Th(t,t+\e,X(s))$ on $[t,t+\e]$ implies that
\bel{Ito-Th-X}
\begin{aligned}
&\Th(t,t+\e,X(t+\e))\\
&\q=\Th(t,t+\e,\xi)+\int_t^{t+\e}\Big\{\Th_x(t,t+\e,X(r)) b\big(r,X(r),\Psi(r,X(r))\big)\\
&\qq\q+{1\over2}\tr\big[\Th_{xx}(t,t+\e,X(r))\si\big(r,X(r),\Psi(r,X(r))\big)\si\big(r,X(r),\Psi(r,X(r))\big)^\top\big]\Big\}dr\\
&\qq+\int_t^{t+\e}\Th_x(t,t+\e,X(r))\si\big(r,X(r),\Psi(r,X(r))\big)dW(r).
\end{aligned}
\ee
Similarly,
\bel{Ito-Th-X-e}
\begin{aligned}
&\Th(t,t+\e,X^\e(t+\e))\\
&\q=\Th(t,t+\e,\xi)+\int_t^{t+\e}\Big\{\Th_x(t,t+\e,X^\e(r)) b\big(r,X^\e(r),\Psi^\e(r,X^\e(r))\big)\\
&\qq\q+{1\over2}\tr\big[\Th_{xx}(t,t+\e,X^\e(r))\si\big(r,X^\e(r),\Psi^\e(r,X^\e(r))\big)\si\big(r,X^\e(r),\Psi^\e(r,X^\e(r))\big)^\top\big]\Big\}dr\\
&\qq+\int_t^{t+\e}\Th_x(t,t+\e,X^\e(r))\si\big(r,X^\e(r),\Psi^\e(r,X^\e(r))\big)dW(r).
\end{aligned}
\ee
Thus, we get
\begin{align*}
&\big|\dbE_t\big[\Th(t,t+\e,X^\e(t+\e))-\Th(t,t+\e,X(t+\e))\big]\big|^2\\
&=\Big|\dbE_t\int_t^{t+\e}\3n\Big\{\Th_x(t,t+\e,X(r))^\top b\big(r,X(r),\Psi(r,X(r))\big)-\Th_x(t,t+\e,X^\e(r))^\top b\big(r,X^\e(r),\Psi^\e(r,X^\e(r))\big)\\
&\qq+{1\over2}\tr\big[\Th_{xx}(t,t+\e,X(r))\si\big(r,X(r),\Psi(r,X(r))\big)\si\big(r,X(r),\Psi(r,X(r))\big)^\top\big]\\
&\qq-{1\over2}\tr\big[\Th_{xx}(t,t+\e,X^\e(r))\si\big(r,X^\e(r),\Psi^\e(r,X^\e(r))\big)\si\big(r,X^\e(r),\Psi^\e(r,X^\e(r))\big)^\top\big]\Big\}dr\Big|^2\\
&\les K\e\dbE_t\int_t^{t+\e}\3n\big|\Th_x(t,t\1n+\1n\e,X(r))^\top\1n b\big(r,X(r),\Psi(r,X(r))\big)\1n-\1n\Th_x(t,t\1n+\1n\e,X^\e(r))^\top\1n b\big(r,X^\e(r),\Psi^\e(r,X^\e(r))\big)\\
&\qq+{1\over2}\tr\big[\Th_{xx}(t,t+\e,X(r))\si\big(r,X(r),\Psi(r,X(r))\big)\si\big(r,X(r),\Psi(r,X(r))\big)^\top\big]\\
&\qq-{1\over2}\tr\big[\Th_{xx}(t,t+\e,X^\e(r))\si\big(r,X^\e(r),\Psi^\e(r,X^\e(r))\big)\si
\big(r,X^\e(r),\Psi^\e(r,X^\e(r))\big)^\top\big]\big|^2dr\\
&\les K\e\dbE_t\int_t^{t+\e} \big[|\Psi^\e(r,X^\e(r))-\Psi(r,X^\e(r))|^2+|X^\e(r)-X(r)|^2\big]dr.
\end{align*}
Then by \rf{Psi-e-Psi} and \rf{X-e-X-est}, the above implies that
\bel{Th-Th-e}
\big|\dbE_t\big[\Th(t,t+\e,X^\e(t+\e))-\Th(t,t+\e,X(t+\e))\big]\big|^2
\les K\e^2\rho(\e)^2(1+|\xi|^2).
\ee
Substituting \rf{Th-Th-e} into \rf{Y-Y-e1}, we have
$$
|J^t(t,\xi;\Psi^\e(\cd,\cd))-J^t(t,\xi;\Psi(\cd,\cd))|^2=|\wt Y^\e(t)-\wt Y(t)|^2\les K\e^2\rho(\e)^2(1+|\xi|^2).
$$
Combining the above with \rf{J-t-Psit}, we get
$$
J^t(t,\xi;\Psi(\cd,\cd))\les\inf_{u(\cd)\in\sU[t,t+\e]}J^t(t,\xi;u(\cd))+o(\e).
$$
Then by \autoref{lemma-BSVIE-approximate-estimate}, we have the following local near optimality of $\Psi(\cd,\cd)$:
\bel{J-Psi-final}
J\big(t,\xi;\Psi\big|_{[t,T]}\big)\les J\big(t,\xi;u\oplus\Psi|_{[t+\e,T]}\big)
+o(\e),\qq\forall u(\cd)\in\sU[t,t+\e].
\ee
In conclusion, we can state the following result formally.

\begin{theorem}\label{equi-stra} \sl
Feedback strategy $\Psi(\cd,\cd)$ defined by \rf{V(limit)-Th-Psi} is an  equilibrium strategy of Problem (N).
\end{theorem}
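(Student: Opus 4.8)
The plan is to verify directly the two defining requirements of \autoref{equilibrium strategy} for the family $\{\Psi^\e\}_{\e>0}$ constructed from the localized HJB equation \rf{PDE-Th-e}: the approximation property \rf{lim} and the local near-optimality \rf{optimality}. Recall that $\Psi^\e$ is set equal to $\Psi(\cd,\cd)$ on $[t+\e,T]\times\dbR^n$ and, on $[t,t+\e)\times\dbR^n$, equal to the feedback map \rf{Psi-e} generated by the classical solution $\Th^\e(t,\cd,\cd)$ of \rf{PDE-Th-e}. The structural fact that unlocks everything is \autoref{time-consistent}: running any $u(\cd)$ on $[t,t+\e)$ and then $\Psi$ on $[t+\e,T]$ converts the evaluation of the recursive cost into the \emph{time-consistent} auxiliary Problem (C$^\Psi[t,t+\e]$), whose terminal datum at $t+\e$ is $\Th(t,t+\e,\cd)$; this reduction is precisely what makes the classical verification theorem available.

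For \rf{optimality} I would assemble the comparison chain
$$J(t,\xi;\Psi^\e)\approx J^t(t,\xi;\Psi^\e)=\inf_{u(\cd)\in\sU[t,t+\e]}J^t(t,\xi;u(\cd))\les J^t(t,\xi;u(\cd))\approx J\big(t,\xi;u\oplus\Psi|_{[t+\e,T]}\big),$$
valid for every admissible $u(\cd)$. The middle equality is \rf{J-t-Psit}, i.e. the genuine optimality of $\Psi^\e$ for the auxiliary problem, furnished by \autoref{veri-thm} applied to \rf{PDE-Th-e}. Each ``$\approx$'' replaces the true recursive cost $J$ by the modified cost $J^t$ at the expense of only $O(\e^2)$, uniformly in $u(\cd)$, by \autoref{lemma-BSVIE-approximate-estimate}; see \rf{J-t-J}. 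Collapsing the chain yields \rf{J-Psit}, which is exactly \rf{optimality}. I stress that this half needs neither (H4) nor any smallness of $\Psi^\e-\Psi$.

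For \rf{lim}, the first identity holds by construction. For the uniform convergence I would invoke (H3)--(H4): writing $\Psi^\e(s,x)-\Psi(s,x)$ as the difference of $\psi$ evaluated at $(t,s,x,\Th^\e,\Th^\e_x,\Th^\e_{xx})$ and at $(s,s,x,\Th,\Th_x,\Th_{xx})$, inserting the intermediate point $(t,s,x,\Th,\Th_x,\Th_{xx})$, and using the bounded differentiability of $\psi$ from (H3) together with the modulus bound (H4), one obtains \rf{Psi-e-Psi}, namely $|\Psi^\e(s,x)-\Psi(s,x)|\les\rho(\e)(1+|x|)$ with $\rho(\e)=K[\wt\rho(\e)+\e]\to0$; restricting to $|x|\les M$ gives the required limit.

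The real difficulty, and the place where earlier arguments in \cite{Wei-Yong-Yu 2017,Mei-Yong 2019} were imprecise, is the nonlocality beneath the construction. Because the equilibrium equation \rf{PDE-Th-t-t+e} couples $\Th(\t,s,x)$ with its diagonal $\Th(s,s,x)$, one must \emph{not} identify $\Th^\e(t,\cd,\cd)$ with $\Th(t,\cd,\cd)$: assuming \rf{Th-The-eq} would force the untenable identity \rf{Th-Th-eq}. Consequently the gap between the $\Psi^\e$-- and $\Psi$--controlled trajectories has to be estimated quantitatively rather than eliminated. I would propagate \rf{Psi-e-Psi} through a Gr\"onwall bound for the state \rf{X-e-est}--\rf{X-e-X-est}, then through the BSDE stability estimate for \rf{BSDE-wtY-t+e} in \rf{Y-e-Y-est}, and finally control $\dbE_t[\Th(t,t+\e,X^\e(t+\e))-\Th(t,t+\e,X(t+\e))]$ via the It\^o expansions \rf{Ito-Th-X}--\rf{Ito-Th-X-e}, reaching $|\wt Y^\e(t)-\wt Y(t)|^2\les K\e^2\rho(\e)^2(1+|\xi|^2)$. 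This bound is the technical heart; together with \rf{J-t-Psit} it upgrades the family-level statement \rf{optimality} to the sharper local near-optimality \rf{near-optim*} of $\Psi(\cd,\cd)$ itself, and so closes the verification.
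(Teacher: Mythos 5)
Your proposal is correct and follows essentially the same route as the paper: reduction to the time-consistent auxiliary Problem (C$^\Psi[t,t+\e]$) via \autoref{time-consistent}, exact optimality of $\Psi^\e$ from \autoref{veri-thm} applied to \rf{PDE-Th-e}, the $O(\e^2)$ transfer between $J^t$ and $J$ from \autoref{lemma-BSVIE-approximate-estimate}, and then, under (H4), the estimate \rf{Psi-e-Psi} propagated through the Gr\"onwall/BSDE-stability/It\^o chain to reach the sharper near-optimality of $\Psi$ itself. You also correctly isolate the point the paper emphasizes, namely that \rf{Th-The-eq} must not be assumed, so no further comment is needed.
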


Since $\Psi(\cd\,,\cd)$ is an  equilibrium strategy of Problem (N), the corresponding closed-loop system \rf{equilibrium-system}
is called an {\it equilibrium system}.

\ms

We now look at the well-posedness of the equilibrium HJB equation. To this end, we first observe the expression
$$\Psi(s,x)=\psi\big(s,s,x,\Th(s,s,x),\Th_x(s,s,x),\Th_{xx}(s,s,x)\big),\qq(s,x)\in[0,T]
\times\dbR^n.$$
From the definition \rf{define-psi} of $\psi(\cd)$, it is clear that the dependence of $\si(\cd)$ on the control process $u(\cd)$ leads to the appearance of $\Th_{xx}(s,s,x)$ in $\Psi(s,x)$, which turns out to bring some essential difficulties in establishing the well-posedness of equilibrium HJB equation. At the moment, such a general situation is widely open and will be investigated in our future publications. In the subsequent analysis of this section, we will consider a special but still important case, in which $\si(\cd)$ is independent of the control process $u(\cd)$. More precisely, we assume that
\bel{si}\si(t,x,u)=\si(t,x),\q (t,x,u)\in[0,T]\times\dbR^n\times U.\ee
Under \rf{si}, (H3) becomes the following:

\ms

{\bf(H3)$'$} There is a continuous map $\psi:\D[0,T]\times\dbR^n\times\dbR\times\dbR^{1\times n}\to U$ such that
\bel{define-psi'}\ba{ll}
\ns\ds pb(s,x,\psi(t,s,x,\th,p))+g(t,s,x,\psi(t,s,x,\th,p),\th,p\si(s,x))\\
\ns\ds=\min_{u\in U}\big[pb(s,x,u)+g(t,s,x,u,\th,p\si(s,x))\big],\q(t,s,x,\th,p)\in \D[0,T]\times\dbR^n\times\dbR\times\dbR^{1\times n}.\ea\ee

\ms

Under (H3)$'$, the equilibrium HJB equation \rf{Th-differential-form-rewrite} (with $\t=0$) reads
\bel{Th-differential-form-si}
\left\{\begin{aligned}
& \Th_s(t,s,x)+{1\over2}\tr[\Th_{xx}(t,s,x)a(s,x)]+\Th_{x}(t,s,x) b\big(s,x,\psi(s,s,x,\Th(s,s,x),\Th_x(s,s,x))\big)\\
&\qq +g\big(t,s,x,\psi(s,s,x,\Th(s,s,x),\Th_x(s,s,x)),\Th(s,s,x),\Th_x(t,s,x) \si(s,x)\big)=0,\\
&\qq\qq\qq\qq\qq\qq\qq\qq\qq (t,s,x)\in \D[0,T]\times\dbR^n,\\
& \Th(t,T,x)=h(t,x),\q (t,x)\in[0,T]\times\dbR^n,
\end{aligned}\right.\ee
where $a(\cd)$ is defined by \rf{def-a}. In the current case, the equilibrium strategy is given by
\bel{Psi*1}\Psi(s,x)=\psi(s,s,x,\Th(s,s,x),\Th_x(s,s,x)),\qq(s,x)\in[0,T]\times\dbR^n,\ee
for which $\Th_{xx}(\cd\,,\cd\,,\cd)$ does not appear.

\ms

For the well-posedness of \rf{Th-differential-form-si}, we make the following assumption.
\begin{taggedassumption}{(H5)}\label{ass:A1PDE} \rm
The maps
$$\left\{\2n\ba{ll}
\ds(s,x,\th,p)\mapsto b(s,x,\psi(s,s,x,\th,p)),\qq(s,x)\mapsto a(s,x),\\
\ns\ds(t,s,x,\th,p,\hat p)\mapsto g(t,s,x,\psi(s,s,x,\th,p),\th,\hat p\si(s,x)),
\qq(s,x)\mapsto h(s,x)\ea\right.$$
are bounded, have all required differentiability with bounded derivatives.
Moreover, there exist two constants $\l_0,\l_1>0$ such that
$$\l_0 I\les a(t,x)\les \l_1 I,\q \forall~(t,x)\in[0,T]\times\dbR^n.$$
\end{taggedassumption}

We have the following result whose proof can be found in \cite[Theorem 6.1]{Wei-Yong-Yu 2017}.
\begin{theorem}\label{theorem-PDE} \sl Let {\rm (H5)} hold. Then PDE \rf{Th-differential-form-si} admits a unique classical solution.
\end{theorem}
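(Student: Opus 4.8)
The plan is to read \rf{Th-differential-form-si} not as a single nonlocal equation but as a family of semilinear parabolic terminal-value problems indexed by the parameter $t$, coupled to one another only through the diagonal data $\Th(s,s,x)$ and $\Th_x(s,s,x)$ (which enter through the control $\psi(s,s,x,\cd,\cd)$ and through the $y$-slot of $g$). If for a moment the diagonal quantities are frozen as \emph{known} coefficients, then for each fixed $t$ the equation in $(s,x)$ takes the form
$$\Th_s+{1\over2}\tr[\Th_{xx}a(s,x)]+\Th_x(t,s,x)\,\bar b(s,x)+\bar g\big(t,s,x,\Th_x(t,s,x)\si(s,x)\big)=0,\q \Th(t,T,x)=h(t,x),$$
where $\bar b$ and $\bar g$ absorb the frozen diagonal data and the only genuinely semilinear dependence left is on the gradient slot $\Th_x(t,s,x)\si(s,x)$. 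Under \autoref{ass:A1PDE} this slice is uniformly parabolic ($\l_0 I\les a\les\l_1 I$) with bounded, smooth coefficients that are Lipschitz in the gradient; hence each $t$-slice admits a unique classical solution, either by parabolic Schauder theory or, equivalently, via the nonlinear Feynman--Kac representation along the diffusion generated by $a$ and $\bar b$.

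This decoupling motivates a fixed-point construction carried out first on a short terminal strip $[T-\d,T]$. I would fix a candidate diagonal profile $\f$ (playing the role of $\Th(s,s,x)$) in a ball of a parabolic Hölder space in which $\f_x$ is Hölder continuous, substitute $\f$ and $\f_x$ into $\psi$ and into the $y$-slot of $g$, solve every $t$-slice as above to obtain $\Th^\f(t,s,x)$, and then extract the new diagonal $\cT\f(s,x):=\Th^\f(s,s,x)$. A fixed point of $\cT$ is precisely the diagonal of a classical solution of \rf{Th-differential-form-si}, and the full field $\Th$ is then recovered by solving the slices once more with the fixed diagonal. The key analytic input is a family of Schauder estimates for the slices that are \emph{uniform in the parameter} $t$: since every datum in (H5) is bounded with bounded derivatives and the ellipticity constants are independent of $t$, the bound on $\|\Th^\f(t,\cd,\cd)\|_{2+\a}$ (and, after differentiating in $x$, on $\Th^\f_x$) depends only on the structural constants and the norm of $\f$. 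Evaluating at $s=t$ then controls $\cT\f$ together with its gradient, so that $\cT$ maps a suitable ball into itself.

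For the contraction estimate, given two profiles $\f^1,\f^2$ the difference $\Th^{\f^1}-\Th^{\f^2}$ solves, for each $t$, a \emph{linear} parabolic equation (with bounded coefficients, the gradient difference entering as a first-order coefficient) with zero terminal datum, whose inhomogeneity is bounded by $\|\f^1-\f^2\|$ through the Lipschitz dependence of $\psi$, $\bar b$ and $\bar g$ on the diagonal slots. A linear Schauder (or energy) estimate on a strip of width $\d$ gives $\|\Th^{\f^1}-\Th^{\f^2}\|\les C(\d)\|\f^1-\f^2\|$ with $C(\d)\to0$ as $\d\to0$ in a suitable (possibly weaker, e.g.\ $C^1$) norm, the higher $C^{2+\a}$ regularity being recovered from the uniform bounds of the previous paragraph. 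Restricting to the diagonal yields $\|\cT\f^1-\cT\f^2\|\les C(\d)\|\f^1-\f^2\|$, so for $\d$ small $\cT$ is a contraction and Banach's theorem produces a unique diagonal, hence a unique classical solution on $[T-\d,T]$. Because $C(\d)$ and the invariant radius depend only on the structural constants and not on the terminal layer, the \emph{same} $\d$ works at each stage; I would therefore continue backward through finitely many strips $[T-(k+1)\d,\,T-k\d]$, using the constructed layer $\Th(\cd,T-k\d,\cd)$ as the new terminal datum, until reaching $s=0$, and the slice-wise uniqueness glues to global uniqueness.

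The delicate point, and the expected main obstacle, is the self-consistency forced at the diagonal: the gradient slot of $g$ carries $\Th_x(t,s,x)$, which at $s=t$ coincides with the very diagonal gradient $\Th_x(s,s,x)$ that feeds $\psi$ and the $y$-slot, so the scheme is genuinely implicit there. The crux is thus to show that the extraction $\f\mapsto\big(\Th^\f(s,s,x),\Th^\f_x(s,s,x)\big)$ is Lipschitz with a \emph{small} constant, which requires gradient Schauder estimates that are stable under perturbation of the diagonal coefficients and uniform in the parameter $t$ all the way up to the diagonal $s=t$. This is exactly where the uniform ellipticity and the boundedness of \emph{all} the derivatives assumed in (H5) are indispensable; once this uniform-in-$t$ gradient control is secured, the contraction and the backward continuation are routine.
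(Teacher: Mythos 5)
The paper offers no proof of \autoref{theorem-PDE} at all: it delegates the statement to \cite[Theorem 6.1]{Wei-Yong-Yu 2017}, whose equilibrium HJB equation differs from \rf{Th-differential-form-si} only in having $\Th(t,s,x)$ rather than $\Th(s,s,x)$ in the $y$-slot of $g$ (compare \rf{Th-differential-form-si*}); the same method covers both, and if anything the diagonal version here is the easier one. Measured against that cited proof, your proposal is the same strategy executed with different machinery. Both arguments read \rf{Th-differential-form-si} as a $t$-parameterized family of uniformly parabolic terminal-value problems coupled only through the diagonal traces $\Th(s,s,x)$, $\Th_x(s,s,x)$; both produce the solution by a contraction on a terminal strip $[T-\d,T]$ with $\d$ depending only on the structural constants in (H5); and both continue backward through finitely many strips. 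The difference is where the fixed point lives: the cited argument (in the tradition of \cite{Yong 2012}) passes to the mild formulation via the fundamental solution $\G$ of $\partial_s+\frac12\tr[a(s,x)\partial_{xx}]$ — which exists with Gaussian bounds exactly because of the uniform ellipticity and boundedness in (H5) — and contracts directly on the pair $(\Th,\Th_x)$ in weighted sup-norms, the small factor coming from the kernel estimate $\int_s^T(r-s)^{-1/2}dr=O(\sqrt\d\,)$ applied to $\G_x$; you instead contract on the frozen diagonal profile $\f$ and re-solve the $t$-slices by Schauder theory at each iteration. These are interchangeable: the "delicate point" you single out — that the map from the diagonal data to the diagonal gradient of the solution must be Lipschitz with a \emph{small} constant — is precisely what the $\G_x$ kernel estimate settles in the cited proof, and what your weak-norm contraction plus strong-norm invariant ball is designed to capture.

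Two assertions in your write-up still need actual proofs for the scheme to close, though both are standard under (H5). First, your fixed-point space must carry regularity in the parameter $t$, not only in $(s,x)$: the extracted diagonal $s\mapsto\Th^\f(s,s,x)$ inherits its continuity (and Hölder regularity) from the \emph{joint} dependence of $\Th^\f(t,s,x)$ on $(t,s)$, so you need a stability-in-$t$ estimate for the slices, available because $h$ and $g$ are Lipschitz in $t$; without it, $\cT\f$ need not lie in the space on which you contract. Second, running every strip with the same $\d$, and concluding uniqueness among \emph{all} classical solutions rather than merely within your invariant ball, both require global a priori bounds that do not degrade from strip to strip: boundedness of $\Th$ follows from the maximum principle and the boundedness of $h,g$ in (H5), a global gradient bound follows similarly, and then the difference of two classical solutions satisfies, for each fixed $t$, a linear parabolic equation with zero terminal datum whose source is controlled by the diagonal differences, to which your small-strip estimate applies. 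With these two points supplied, your argument is a correct, self-contained substitute for the citation.
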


In the proof of \autoref{equi-stra}, we introduce the assumption (H4) to get the  local near optimality of the equilibrium strategy $\Psi(\cd,\cd)$.
When $\si(\cd)$ is independent of the control $u(\cd)$ (see \rf{si}),
the arguments of \autoref{equi-stra} still hold true with Assumption (H4)  replaced by the following assumption:

\ms

{\bf(H4)$'$} There exists a nondecreasing continuous function $\rho:[0,\i)\to[0,\i)$ with $\rho(0)=0$ such that
$$|\Th^\e(t,s,x)-\Th(t,s,x)|+|\Th_x^\e(t,s,x)-\Th_x(t,s,x)|\les \rho(\e)(1+|x|),\qq\forall(s,x)\in[t,t+\e]\times\dbR^n.$$

We shall show that the above assumption is a consequence of (H5).
In fact, under (H5), it follows from \autoref{theorem-PDE} that $\Th(\cd,t+\e,\cd)$ is well-defined and belongs to $C^{1,2}([t,t+\e]\times\dbR^n;\dbR)$.
Thus one can regard \rf{PDE-Th-t-t+e} as a new equilibrium HJB equation  satisfying (H5) with $h(\cd,\cd)$ and $[0,T]$ replaced by $\Th(\cd,t+\e,\cd)$ and $[t,t+\e]$, respectively. Moreover, we see that PDE \rf{PDE-Th-e} is the approximate equation of \rf{PDE-Th-t-t+e} with the partition $\Pi:t=t_0<t_1=t+\e$. Then by the last inequality in the proof of \cite[Theorem 6.2]{Wei-Yong-Yu 2017}, we have
$$
|\Th^\e(t,s,x)-\Th(t,s,x)|+|\Th_x^\e(t,s,x)-\Th_x(t,s,x)|\les K\|\Pi\|=K\e,\qq\forall(s,x)\in[t,t+\e]\times\dbR^n,
$$
for some constant $K>0$, which implies that the assumption (H4)$'$  holds.
Therefore, under (H1), (H2), (H3)$'$,  and (H5), Problem (N) admits an equilibrium strategy over $[0,T]$.

\section{BSVIEs with Diagonal Values of $Z(\cd\,,\cd)$.}\label{BSVIEs}

In this section, we look at a new type BSVIE resulted from the equilibrium solution to Problem (N). Let (H5) hold with $d=n$ and \rf{si} hold, namely, $\si$ is independent of $u$. Thus, $\si(s,x)$ is invertible. Let $\Th(\cd\,,\cd\,,\cd)$ be the classical solution to the equilibrium HJB equation \rf{Th-differential-form-si}. Then $\Psi(\cd\,,\cd)$ defined by \rf{Psi*1} is an equilibrium strategy. Substituting this $\Psi(\cd\,,\cd)$ into \rf{equilibrium-system} leads to the following closed-loop system:
\bel{FSDE-BSVIE}\left\{\2n\ba{ll}
\ds\bar X(s)=\xi+\int_\t^sb\big(r,\bar X(r),\psi(r,r,\bar X(r),
\Th(r,r,\bar X(r)),\Th_x(r,r,\bar X(r)))\big)dr\\
\ns\ds\qq\qq\qq\qq+\int_\t^s\si(r,\bar X(r))dW(r),\\
\ns\ds\bar Y(s)\1n=\1n h(s,\bar X(T))\1n+\1n\int_s^T\3n g\big(s,r,\bar X(r),
\psi(r,r,\bar X(r),
\Th(r,r,\bar X(r)),\Th_x(r,r,\bar X(r))),\bar Y(r),\bar Z(s,r)\big)dr\\
\ns\ds\qq\qq\qq\qq-\int_s^T \bar Z(s,r)dW(r),\qq s\in[\t,T].\ea\right.\ee
On the other hand, we have
\bel{bar Y*}\bar Y(r)=\Th(r,r,\bar X(r))\equiv V(r,\bar X(r)),\qq r\in[\t,T],\ee
and
\bel{bar Z*}\bar Z(s,r)=\Th_x(s,r,\bar X(r))\si(r,\bar X(r)),\qq(s,r)\in\D[\t,T].\ee
Thus,
\bel{bar Z**}\Th_x(r,s,\bar X(r))=\bar Z(s,r)\si(r,\bar X(r))^{-1},\qq(s,r)\in\D[\t,T].\ee
Consequently, the above closed-loop system can be written as follows.
\bel{FSDE-BSVIE*}\left\{\2n\ba{ll}
\ds\bar X(s)=\xi+\int_\t^sb\big(r,\bar X(r),\psi(r,r,\bar X(r),
\bar Y(r),\bar Z(r,r)\si(r,\bar X(r))^{-1})\big)dr+\int_\t^s\si(r,\bar X(r))dW(r),\\
\ns\ds\bar Y(s)\1n=\1n h(s,\bar X(T))\1n+\1n\int_s^T\3n g\big(s,r,\bar X(r),
\psi(r,r,\bar X(r),
\bar Y(r),\bar Z(r,r)\si(r,\bar X(r))^{-1}),\bar Y(r),\bar Z(s,r)\big)dr\\
\ns\ds\qq\qq\qq\qq-\int_s^T \bar Z(s,r)dW(r),\qq s\in[\t,T].\ea\right.\ee
If we denote
\bel{bar b,bar g}\ba{ll}
\ns\ds\bar b(r,x,y,\z)=b\big(r,x,\psi(r,r,x,y,\z\si(r,x)^{-1}\big),\qq(r,x,y,\z)\in[\t,T]\times\dbR^n
\times\dbR\times\dbR^{1\times n},\\
\ns\ds\bar g\big(s,r,x,y,z,\z)=g(s,r,x,\psi(r,r,x,y,\z\si(r,x)^{-1}),y,z\big),\\
\ns\ds\qq\qq\qq\qq\qq\qq(s,r,x,y,z,\z)\in\D[\t,T]
\times\dbR^n\times\dbR\times\dbR^{1\times n}\times\dbR^{1\times n}.\ea\ee
Then the above closed-loop system can further be written as follows.
\bel{FSDE-BSVIE**}\left\{\2n\ba{ll}
\ds\bar X(s)=\xi+\int_\t^s\bar b\big(r,\bar X(r),\bar Y(r),\bar Z(r,r)\big)dr+\int_\t^s\si(r,\bar X(r))dW(r),\\
\ns\ds\bar Y(s)\1n=\1n h(s,\bar X(T))\1n+\1n\int_s^T\3n\bar g\big(s,r,\bar X(r),
\bar Y(r),\bar Z(s,r),\bar Z(r,r)\big)dr-\int_s^T \bar Z(s,r)dW(r),\ea\right.\q s\in[\t,T].\ee
Unlike the BSVIEs studied in the literature, the above BSVIE contains the {\it diagonal values} $\bar Z(r,r)$ of $Z(\cd\,,\cd)$. To our best knowledge, this is the first time that such a BSVIE appears. Our above results show that the above coupled FSDE and BSVIE admits an adapted solution $(\bar X(\cd),\bar Y(\cd),\bar Z(\cd\,,\cd))$. Moreover, the representation \rf{bar Y*}--\rf{bar Z*} holds, with $\Th(\cd\,,\cd\,,\cd)$ being the classical solution to the equilibrium HJB equation \rf{Th-differential-form-si}. We point out that \rf{bar Y*} and \rf{bar Z**}, which represents the solution $\Th(\cd\,,\cd\,,\cd)$ to the equilibrium HJB equation \rf{Th-differential-form-si}, is a kind of {\it Feynman--Kac formula}.

\ms

The above naturally motivates us to investigate the following more general coupled FSDE and BSVIE:
\bel{FSDE-BSVIE-general}\left\{\2n\ba{ll}
\ds X(s)=\xi+\int_\t^s b\big(r,X(r),Y(r),Z(r,r)\big)dr+\int_\t^s\si\big(r,X(r),Y(r)\big)dW(r),\\
\ns\ds Y(s)\1n=\1n h(s,X(T))\1n+\1n\int_s^T\3n g\big(s,r,X(r),
Y(r),Z(s,r),Z(r,r)\big)dr-\int_s^TZ(s,r)dW(r),\q s\in[\t,T].\ea\right.\ee
The main feature is that the generator $g$ of the above BSVIE contains the diagonal value $Z(r,r)$. In the rest of this section, we will sketch some relevant results of the above coupled FSDE and BSVIE. More general detailed investigation of such BSVIEs will be carried out elsewhere.

\ms

Inspired by the results of previous sections, as well as the ideas from \cite{Ma-Protter-Yong 1994, Wang-Yong 2019}, we let $(t,s,x)\mapsto\Th(t,s,x)$ be $C^{0,1,2}(\D[0,T]\times\dbR^n)$. Applying It\^o's formula to the process $s\mapsto\Th(t,s,X(s))$, one obtains
%
%
%
%
%
%
\bel{Th-Th}\ba{ll}
\ns\ds\Th(t,T,X(T))-\Th(t,t,X(t))=\int_t^T\[\Th_s(t,s,X(s))+\Th_x(t,s,X(s))b(s,X(s),Y(s),Z(s,s))\\
\ns\ds\qq\qq\qq\qq\qq\qq\qq\qq+{1\over2}\tr\(\Th_{xx}(t,s,X(s))a(s,X(s),Y(s))\)\]ds\\
\ns\ds\qq\qq\qq\qq\qq\qq\qq\qq+\int_t^T\Th_x(t,s,X(s))\si(s,X(s),Y(s))dW(s),\ea\ee
where
$$a(s,x,y)=\si(s,x,y)\si(s,x,y)^\top,\qq(s,x,y)\in[0,T]\times\dbR^n\times\dbR.$$
Comparing \rf{Th-Th} with the BSVIE in \rf{FSDE-BSVIE-general}, we see that the following should be the right choice:
\bel{Th=h}\Th(t,T,X(T))=h(t,X(T)),\ee
\bel{Y,Z}Y(t)=\Th(t,t,X(t)),\qq Z(t,s)=\Th_x(t,s,X(s))\si(s,X(s),Y(s)),\ee
and
$$\ba{ll}
\ns\ds\Th_s(t,s,X(s))+\Th_x(t,s,X(s))b\big(s,X(s),Y(s),Z(s,s)\big)\\
\ns\ds\q+{1\over2}\tr\(\Th_{xx}(t,s,X(s))a(s,X(s),Y(s))\)+g\big(t,s,X(s),Y(s),Z(t,s),
Z(s,s)\big)=0.\ea$$
Thus, formally, we should have
$$\ba{ll}
\ns\ds\Th_s(t,s,X(s))+\Th_x(t,s,X(s))b\big(s,X(s),\Th(s,s,X(s)),\Th_x(s,s,X(s))\si(s,X(s),
\Th(s,s,X(s)))\big)\\
\ns\ds\qq+{1\over2}\tr\(\Th_{xx}(t,s,X(s))a\big(s,X(s),\Th(s,s,X(s))\big)\)\\
\ns\ds\qq+g\big(t,s,X(s),\Th(s,s,X(s)),\Th_x(t,s,X(s))\si(s,X(s),\Th(s,s,X(s))),\\
\ns\ds\qq\qq\qq\qq\qq\qq\Th_x(s,s,X(s))\si(s,X(s),\Th(s,s,X(s)))\big)=0.\ea$$
Hence, we have the following result.

\bt{theorem-coupled-SDE-BSVIE} \sl Let the following system admit a classical solution $\Th(\cd\,,\cd\,,\cd)$:
\bel{PDE}\left\{\2n\ba{ll}
\ds\Th_s(t,s,x)+{1\over2}\tr\[\Th_{xx}(t,s,x)a(s,x,\Th(s,s,x))\]\\
\ns\ds\q+\Th_x(t,s,x)b\big(s,x,\Th(s,s,x),
\Th_x(s,s,x)\si(s,x,\Th(s,s,x))\big)\\
\ns\ds\q+g\big(t,s,x,\Th(s,s,x),\Th_x(t,s,x)\si(s,x,\Th(s,s,x)),\Th_x(s,s,x)
\si(s,x,\Th(s,s,x))\big)=0,\\
\ns\ds\Th(t,T,x)=h(t,x),\qq(t,x)\in[0,T]\times\dbR^n,\ea\right.\ee
Let $X(\cd)\equiv X(\cd\,;\t,\xi)$ be the solution to the following FSDE:
$$\ba{ll}
\ns\ds X(s)=\xi+\int_\t^sb\big(r,X(r),\Th(r,r,X(r)),\Th_x(r,r,X(r))\si(r,X(r),
\Th(r,r,X(r)))\big)dr\\
\ns\ds\qq\qq+\int_\t^s\si\big(s,X(s),\Th(r,r,X(r))\big)dW(r),\qq s\in[\t,T].\ea$$
Then $(Y(\cd),Z(\cd\,,\cd))$ defined by \rf{Y,Z} is an adapted solution to the
BSVIE in \rf{FSDE-BSVIE-general}.

\et

When $\si(s,x,\th)$ is independent of $\th$, \autoref{theorem-PDE} provides a sufficient condition for the well-posedness of \rf{PDE}.
We now look at the uniqueness of the adapted solutions \rf{FSDE-BSVIE-general}. To this end, let us first introduce the following assumption.
\begin{taggedassumption}{(H6)}\label{ass:A2PDE} \rm
Let $d=n$.
There exist  maps $\mu:\D[0,T]\to\dbR$, $\n:\D[0,T]\to\dbR$, $g_0:[0,T]\times\dbR^n\times\dbR\times\dbR^{1\times n}\to\dbR$, $h_0:[0,T]\times\dbR^n\to\dbR$, $\a:[0,T]\to\dbR^n$
such that
\bel{A21}
\begin{aligned}
  &h(t,x)=\mu(t,T)h_0(T,x),\q \forall~(t,x)\in[0,T]\times\dbR^n,\\
  &\bar g(t,s,x,y,\z,z)=\n(t,s)g_0(s,x,y,\z)+z\a(s),\\
  &\qq\qq\qq \forall~(t,s,x,y,\z,z)\in\D[0,T]\times\dbR^n\times\dbR\times\dbR^{1\times n}\times\dbR^{1\times n}.
\end{aligned}
\ee
Suppose that the above maps are all bounded, have all required differentiability with bounded derivatives.
There exist continuous maps $\cM,\cN,\cK:[0,T]\to\dbR$ and $M:[0,T]^2\to\dbR$ such that
\bel{A22}
\begin{aligned}
  &  \cM(t)>0, \q \cM(t)\mu(t,T)-\cM(s)\mu(s,T)=M(t,s)\cN(T),\q t,s\in [0,T],\\
  &  \cM(t)\n(t,r)-\cM(s)\n(s,r)=M(t,s)\cK(r),\q t,s,r\in[0,T].
\end{aligned}
\ee
\end{taggedassumption}

Let us list some possible  functions $\mu(\cd,\cd)$ and $\n(\cd,\cd)$ satisfying (H6) as follows:
\begin{enumerate}[(i)]
\item {\it Heterogeneous discounting}: $\mu(t,T)=e^{-\l_1(T-t)}$, $\nu(t,r)=e^{-\l_2(r-t)}$ with $\l_1,\l_2>0$, $\l_1\ne\l_2$.
 We can let $\cM(t)=e^{-\l_1t}$,  $M(t,s)=e^{(\l_2-\l_1)t}-e^{(\l_2-\l_1)s}$, $\cN(T)=0$, $\cK(r)=e^{-\l_2r}$.
%
\item {\it Convex combination of two exponential discounting }: $\mu(t,T)=\a e^{-\l_1(T-t)}+(1-\a)e^{-\l_2(T-t)} $,
$\nu(t,r)=\a e^{-\l_1(r-t)}+(1-\a)e^{-\l_2(r-t)} $, with $\a\in(0,1)$, $\l_1,\l_2>0$, $\l_1\ne\l_2$.  We can let $\cM(t)=e^{-\l_1t}$,  $M(t,s)=e^{(\l_2-\l_1)t}-e^{(\l_2-\l_1)s}$, $\cN(T)=(1-\a)e^{-\l_2 T}$, $\cK(r)=(1-\a)e^{-\l_2r}$.
  %
%
%
\item {\it Quasi-exponential discounting}: $\mu(t,T)=\big(1+\a(T-t)\big)e^{-\l(T-t)}$,
$\nu(t,r)=\big(1+\a(r-t)\big)e^{-\l(r-t)}$, with $\a,\l>0$.  We can let $\cM(t)=e^{-\l t}$,  $M(t,s)=s-t$, $\cN(T)=\a e^{-\l T}$, $\cK(r)=\a e^{-\l r}$.
%
\end{enumerate}

Under (H6), the equation \rf{FSDE-BSVIE**} becomes
\bel{coupled-SDE-BSVIE-bar-mu-v}\left\{\begin{aligned}
\bar X(t) &=\xi+\int_\t^t\bar b(r,\bar X(r),\bar Y(r),\bar Z(r,r))dr+\int_\t^t\si(r,\bar X(r))dW(r),\q t\in[\t,T],\\
\bar Y(t) &=\mu(t,T)h_0(T,\bar X(T)) +\int_t^T \big[\n(t,r) g_0(r,\bar X(r),\bar Y(r),\bar Z(r,r))+\bar Z(t,r)\a(r)\big]dr\\
&\qq\qq\qq\qq\q  -\int_t^T\bar Z(t,r)dW(r),\q t\in [\t,T].
\end{aligned}\right.\ee
For the above coupled SDE and BSVIE, we have the following well-posedness result.

\begin{theorem}\label{theorem-uniqueness} \sl
Let {\rm (H5)} and {\rm (H6)} hold. Then the coupled SDE and BSVIE \rf{coupled-SDE-BSVIE-bar-mu-v} admits a unique adapted solution.
\end{theorem}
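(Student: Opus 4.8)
The plan is to separate existence from uniqueness and to base the latter on a reduction of the diagonal-value BSVIE in \rf{coupled-SDE-BSVIE-bar-mu-v} to a standard coupled forward--backward SDE (FBSDE), for which the structural hypothesis (H6) is precisely what is needed. Existence is essentially already available: in \rf{coupled-SDE-BSVIE-bar-mu-v} the diffusion $\si(r,\bar X(r))$ does not depend on the control (nor on $\bar Y$), so the associated nonlocal equation is the equilibrium HJB equation \rf{Th-differential-form-si}, which admits a unique classical solution $\Th(\cd\,,\cd\,,\cd)$ by \autoref{theorem-PDE} under (H5); feeding $\Th$ into \autoref{theorem-coupled-SDE-BSVIE} yields an adapted solution. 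The substantive point is therefore uniqueness, and this is where (H6) enters.

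For the reduction I would fix a reference time $s_0\in[\t,T]$ and set $m(t)\deq M(t,s_0)$. Taking $s=s_0$ in the two identities of \rf{A22} rewrites the weighted discount factors as affine functions of $m(t)$,
\[
\cM(t)\m(t,T)=\cM(s_0)\m(s_0,T)+m(t)\cN(T),\qq \cM(t)\n(t,r)=\cM(s_0)\n(s_0,r)+m(t)\cK(r).
\]
Now let $(\bar X(\cd),\bar Y(\cd),\bar Z(\cd\,,\cd))$ be any adapted solution and treat $g_0(r)\deq g_0(r,\bar X(r),\bar Y(r),\bar Z(r,r))$ as a given process. Introduce the two linear BSDEs (for $i=1,2$)
\[
Y^i(s)=\eta^i+\int_s^T\big[c^i(r)g_0(r)+Z^i(r)\a(r)\big]dr-\int_s^TZ^i(r)dW(r),\qq s\in[\t,T],
\]
with $(\eta^1,c^1)=\big(\cM(s_0)\m(s_0,T)h_0(T,\bar X(T)),\,\cM(s_0)\n(s_0,\cd)\big)$ and $(\eta^2,c^2)=\big(\cN(T)h_0(T,\bar X(T)),\,\cK(\cd)\big)$. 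Using these identities and treating $g_0$ as exogenous, one verifies that, for each fixed $t$, the two linear BSDEs solved by $Y^1(\cd)+m(t)Y^2(\cd)$ and by $\cM(t)\bar Y(t)$ (obtained by freezing $t$ in \rf{coupled-SDE-BSVIE-bar-mu-v} and multiplying by $\cM(t)$) have identical terminal datum $\cM(t)\m(t,T)h_0(T,\bar X(T))$, identical source $\cM(t)\n(t,\cd)g_0(\cd)$, and identical $\a$-coupling in the integrand; uniqueness for linear BSDEs then forces the decomposition
\[
\cM(t)\bar Y(t)=Y^1(t)+m(t)Y^2(t),\qq \cM(t)\bar Z(t,r)=Z^1(r)+m(t)Z^2(r),\qq(t,r)\in\D[\t,T].
\]

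The decisive gain appears on the diagonal: since $Z^1,Z^2$ are ordinary BSDE integrands, $Z^i(r)$ is unambiguous and $\bar Z(r,r)=[Z^1(r)+m(r)Z^2(r)]/\cM(r)$, while $\bar Y(r)=[Y^1(r)+m(r)Y^2(r)]/\cM(r)$; here I use that $\cM$ is continuous and strictly positive, hence bounded away from $0$. Consequently $g_0(r)$ and the forward drift $\bar b(r,\bar X(r),\bar Y(r),\bar Z(r,r))$ become genuine functions of $(r,\bar X(r),Y^1(r),Y^2(r),Z^1(r),Z^2(r))$, and $(\bar X,Y^1,Y^2,Z^1,Z^2)$ is seen to solve a coupled Markovian FBSDE whose diffusion $\si(r,\bar X)$ is independent of $(Y,Z)$ and satisfies $\l_0 I\les a\les\l_1 I$ by (H5), and whose reduced drift and generators are Lipschitz with bounded derivatives by (H5)--(H6). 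For such an FBSDE the four-step scheme of Ma--Protter--Yong \cite{Ma-Protter-Yong 1994} provides a unique adapted solution on all of $[\t,T]$ (through the well-posed quasilinear parabolic system for the two decoupling fields). Uniqueness of $(\bar X,Y^1,Y^2,Z^1,Z^2)$ then forces uniqueness of $\bar Y(\cd)$ and $\bar Z(\cd\,,\cd)$ via the decomposition formulas; running the construction in the forward direction (solve the FBSDE, then define $\bar Y,\bar Z$) re-derives existence and verifies \rf{coupled-SDE-BSVIE-bar-mu-v}.

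I expect the main obstacle to be the reduction itself rather than the FBSDE solvability. One must check that the affine structure encoded in \rf{A22} is exactly what permits the troublesome diagonal $\bar Z(r,r)$ to be re-expressed through the integrands $Z^1,Z^2$ of honest BSDEs, and one must be careful about the solution class in which $\bar Z(\cd\,,\cd)$ has a well-defined diagonal, so that $g_0(r)$ and $\bar b$ make sense; here the representation $\bar Z(s,r)=\Th_x(s,r,\bar X(r))\si(r,\bar X(r))$ with $\Th$ of class $C^{0,1,2}$, hence $r\mapsto\bar Z(s,r)$ continuous, is what legitimizes the diagonal. Once the reduction is in place, the non-degeneracy of $a$ supplied by (H5) reduces the remaining work to a standard invocation of the four-step scheme.
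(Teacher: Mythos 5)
Your proposal is correct and takes essentially the same route as the paper: existence by combining \autoref{theorem-PDE} with \autoref{theorem-coupled-SDE-BSVIE}, and uniqueness by lifting the BSVIE to a $t$-parametrized family of linear BSDEs, using the structural identities \rf{A22} of (H6) to decompose the $\cM$-weighted solution affinely in $M(t,\cd)$ into two $t$-independent BSDE pairs, closing everything into a coupled Markovian FBSDE, and invoking Ma--Protter--Yong \cite[Theorem 4.1]{Ma-Protter-Yong 1994}. The only cosmetic differences are that the paper works with differences $\wt y_i(t,s)-\wt y_i(t',s)=M(t,t')\h Y_i(s)$ (then sets $t'=\t$) rather than your reference-point form $m(t)=M(t,s_0)$, and that the paper recovers the off-diagonal values of $\bar Z$ by a final appeal to classical BSVIE uniqueness, whereas you read them off directly from the decomposition formula --- the same argument in slightly different clothing.
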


\begin{proof}
The existence of the adapted solution to \rf{coupled-SDE-BSVIE-bar-mu-v} is a combination of  \autoref{theorem-PDE} and \autoref{theorem-coupled-SDE-BSVIE}.
We only need to prove the uniqueness here.
Let $(\bar X_i(\cd),\bar Y_i(\cd),\bar Z_i(\cd,\cd))$; $i=1,2$ be two adapted solutions to \rf{coupled-SDE-BSVIE-bar-mu-v}. By the uniqueness of BSVIE, there exist uniquely $(y_i(\cd\,,\cd),z_i(\cd\,,\cd));i=1,2$ such that
\bel{coupled-SDE-BSVIE-bar-mu-v-proof}
\begin{aligned}
 y_i(t,s) &=\mu(t,T)h_0(T,\bar X_i(T))+\int_s^T\big[\n(t,r)g_0(r,r \bar X_i(r),\bar Y_i(r),\bar Z_i(r,r))+z_i(t,r)\a(r) \big]dr\\
&\qq\qq\qq\qq  -\int_s^T z_i(t,r)dW(r),\q (t,s)\in \D[\t,T],~i=1,2,
\end{aligned}\ee
and
\bel{l-z-Y-Z}
y_i(t,t)=\bar Y_i(t),\q z_i(t,s)=\bar Z_i(t,s),\q(t,s)\in\D[\t,T],~i=1,2.
\ee
For any $t\in[\t,T)$, multiplying the both side of BSVIE \rf{coupled-SDE-BSVIE-bar-mu-v-proof} by $\cM(t)$, we have
\begin{align*}
\cM(t)y_i(t,s) &=\cM(t)\mu(t,T)h_0(T,\bar X_i(T)) +\int_s^T \big[\cM(t)\n(t,r) g_0(r, \bar X_i(r),y_i(r,r), z_i(r,r))\\
&\qq\qq\qq+\cM(t) z_i(t,r)\a(r)\big]dr-\int_s^T\cM(t) z_i(t,r)dW(r),\q (t,s)\in \D[\t,T].
\end{align*}
Let
\bel{ti-Y-Z}
\wt X_i(t)=\bar X_i(t),\q \wt y_i(t,s)=\cM(t)y_i(t,s),\q \wt z_i(t,s)=\cM(t)z_i(t,s),\q (t,s)\in\D[\t,T],
\ee
then
\begin{align*}
\wt y_i(t,s) &=\cM(t)\mu(t,T)h_0(T,\wt X_i(T)) +\int_s^T \big[\cM(t)\n(t,r) g_0\big(r, \wt X_i(r),\cM^{-1}(r)\wt y_i(r,r), \cM^{-1}(r)\wt z_i(r,r)\big)\\
&\qq\qq\qq\qq +\wt z_i(t,r)\a(r)\big]dr -\int_s^T\wt z_i(t,r)dW(r),\q (t,s)\in \D[\t,T].
\end{align*}
For any $t'\in[\t,T)$, combining the above with \rf{A22}, $\big(\wt y_i(t,s)-\wt y_i(t',s),\wt z_i(t,s)-\wt z(t',s)\big);t\vee t'\les s\les T$ satisfies
\begin{align*}
\wt y_i(t,s)-\wt y_i(t',s) &=\big[\cM(t)\m(t,T)-\cM(t')\m(t',T)\big]h_0(T,\ti X_i(T)) \\
&\q+\int_s^T \Big[\big[\cM(t)v(t,r)-\cM(t')v(t',r)\big] g_0\big(r,\wt X_i(r),\cM^{-1}(r)\wt y_i(r,r), \cM^{-1}(r)\wt z_i(r,r)\big)\\
&\qq\qq +\big[\wt z_i(t,r)-\wt z_i(t',r)\big]\a(r)\Big]dr-\int_s^T\big[\wt z_i(t,r)-\wt z_i(t',r)\big]dW(r)\\
&=M(t,t')\cN(T)h_0(T,\ti X_i(T)) \\
&\q+\int_s^T \Big[M(t,t')\cK(r)g_0\big(r,\wt X_i(r),\cM^{-1}(r)\wt y_i(r,r), \cM^{-1}(r)\wt z_i(r,r)\big)\\
&\qq\qq +\big[\wt z_i(t,r)-\wt z_i(t',r)\big]\a(r)\Big]dr -\int_s^T\big[\wt z_i(t,r)-\wt z_i(t',r)\big]dW(r).
\end{align*}
Let $(\h Y_i(\cd),\h Z_i(\cd));i=1,2$ be the unique solution to the following BSDE:
\bel{Y-Z-hat}
\begin{aligned}
\h Y_i(s)&=\cN(T)h_0(T,\wt X_i(T)) \\
&\q+\int_s^T \Big[\cK(r)g_0\big(r, \wt X_i(r),\cM^{-1}(r)\wt y_i(r,r), \cM^{-1}(r)\wt z_i(r,r)\big)+\h Z_i(r)\a(r)\Big]dr \\
&\q-\int_s^T\h Z_i(r)dW(r),\q s\in [\t,T],
\end{aligned}\ee
respectively. Multiplying the both side of BSDE \rf{Y-Z-hat} by $M(t,t')$, $\big(M(t,t')\h Y_i(\cd),M(t,t')\h Z_i(\cd)\big)$ satisfies
\begin{align*}
M(t,t')\h Y_i(s)&=M(t,t')\cN(T)h_0(T,\wt X_i(T)) \\
&\q+\int_s^T \Big[M(t,t')\cK(r)g_0\big(r, \wt X_i(r),\cM^{-1}(r)\wt y_i(r,r), \cM^{-1}(r)\wt z_i(r,r)\big)+M(t,t')\h Z_i(r)\a(r)\Big]dr \\
&\q-\int_s^TM(t,t')\h Z_i(r)dW(r),\q s\in [t\vee t',T].
\end{align*}
For the fixed $t,t'$, by the uniqueness of the adapted solution to the above BSDE, we have
$$\wt y_i(t,s)-\wt y_i(t',s)=M(t,t')\h Y_i(s),\q\wt z_i(t,s)-\wt z_i(t',s)=M(t,t')\h Z_i(s),\q s\in [t\vee t',T].$$
In particular, the following holds by taking $t=s$ and $t'=\t$,
\bel{relationship}
\wt y_i(s,s)-\wt y_i(\t,s)=M(s,\t)\h Y_i(s),\q \wt z_i(s,s)-\wt z_i(\t,s)=M(s,\t)\h Z_i(s).\ee
Thus, $(\wt X_i(\cd),\wt y_i(\t,\cd),\h Y_i(\cd),\wt z_i(\t,\cd),\h Z_i(\cd))$ satisfies the following coupled FBSDEs:
$$\left\{\begin{aligned}
\wt X_i(s) &=\xi+\int_\t^s\bar b\big (r,\wt X_i(r),\cM^{-1}(r)[\wt y_i(\t,r)+M(r,\t)\h Y_i(r)],\cM^{-1}(r)[\wt z_i(\t,r)+M(r,\t)\h Z_i(r)]\big)dr\\
&\q+\int_\t^s\si(r,\wt X_i(r))dW(r),\q s\in[\t,T],\\
\wt y_i(\t,s) &=\cM(\t)\mu(\t,T)h_0(T,\wt X_i(T))\\
&\q+\int_s^T\Big [\cM(\t)v(\t,r) g_0\big(r, \wt X_i(r),\cM^{-1}(r)\big[\wt y_i(\t,r)+M(r,\t)\h Y_i(r)\big] , \cM^{-1}(r)\big[\wt z_i(\t,r)+M(r,\t)\h Z_i(r)\big]\big)\\
&\qq\qq+\wt z_i(\t,r)\a(r)\Big]dr -\int_s^T\wt z_i(\t,r)dW(r),\q s\in [\t,T],\\
\hat Y_i(s)&=\cN(T)h_0(T,\wt X_i(T)) \\
&\q+\int_s^T \Big[\cK(r)g_0\big(r,\wt X_i(r),\cM^{-1}(r)\big[\wt y_i(\t,r)+M(r,\t)\h Y_i(r)\big], \cM^{-1}(r)\big[\wt Z_i(\t,r)+M(r,\t)\h Z_i(r)\big]\big) \\
&\qq\qq +\h Z_i(r)\a(r)\Big]dr-\int_s^T\h Z_i(r)dW(r),\q s\in [\t,T].
\end{aligned}\right.
$$
By \cite[Theorem 4.1]{Ma-Protter-Yong 1994}, the above coupled FBSDEs admit a unique adapted solution.
It follows that
$$\big(\wt X_1(\cd),\wt y_1(\t,\cd),\h Y_1(\cd),\wt z_1(\t,\cd),\h Z_1(\cd)\big)=\big(\wt X_2(\cd),\wt y_2(\t,\cd),\h Y_2(\cd),\wt z_2(\t,\cd),\h Z_2(\cd)\big).$$
%
Combining this with \rf{relationship}, we have
$$
\wt y_1(s,s)=\wt y_2(s,s),\q \wt z_1(s,s)=\wt z_2(s,s), \q s\in[\t,T].
$$
By the definition \rf{ti-Y-Z} of $(\wt X_i(\cd),\wt y_i(\cd,\cd),\wt z_i(\cd,\cd))$  and the relationship \rf{l-z-Y-Z}, we have
\bel{X-Y(s,s)-Z(s,s)}
\begin{aligned}
 &\bar X_1(s)=\bar X_2(s),\q \bar Y_1(s)=y_1(s,s)=y_2(s,s)=\bar Y_2(s),\\
 &\bar Z_1(s,s)=z_1(s,s)=z_2(s,s)=\bar Z_2(s,s), \q s\in[\t,T].
\end{aligned}
\ee
Then $(\bar Y_i(\cd),\bar Z_i(\cd,\cd))$ satisfies the following BSVIE:
\bel{BSVIE-bar-Z(s,s)}
\begin{aligned}         %
 \bar Y_i(t) &=\mu(t,T)h_0(T,\bar X(T)) +\int_t^T \big[v(t,r) g_0(r, \bar X(r),\bar Y(r),\bar Z(r,r))+\bar Z_i(t,r)\a(r)\big]dr\\
&\qq\qq\qq\qq  -\int_s^T \bar Z_i(t,r)dW(r),\q (t,s)\in \D[\t,T],
\end{aligned}\ee
with $\bar X(s)\deq \bar X_1(s)\equiv \bar X_2(s), \bar Y(s)\deq \bar Y_1(s)\equiv \bar Y_2(s), \bar Z(s,s)\deq\bar Z_1(s,s)\equiv\bar Z_2(s,s); s\in[\t,T]$.
Note that \rf{BSVIE-bar-Z(s,s)} is a classical BSVIE (without $\bar Z_i(r,r)$).
By  \autoref{lmm:well-posedness-SDE}, we have
$$
(\bar Y_1(\cd),\bar Z_1(\cd,\cd))= (\bar Y_2(\cd),\bar Z_2(\cd,\cd)).
$$
Combining the above with \rf{X-Y(s,s)-Z(s,s)}, the uniqueness of the adapted solution to the coupled FSDE and BSVIE \rf{coupled-SDE-BSVIE-bar-mu-v} is obtained.
\end{proof}
\begin{remark}\rm
Under (H5)--(H6),  \autoref{theorem-uniqueness} establishes the well-posedness of the coupled SDE and BSVIE \rf{coupled-SDE-BSVIE-bar-mu-v},
which is relevant to several important recursive optimal control problems with nonexponential discounting.
The more general case \rf{FSDE-BSVIE-general} is still open. We hope to explore that in our future publications.
\end{remark}

\section{Concluding Remarks}\label{remarks}

In this section, we are making some remarks to conclude this paper.

\ms

First of all, for recursive cost functional with nonexponential discounting, should one use parameterized BSDEs as in \cite{Yong 2012,Wei-Yong-Yu 2017} or use BSVIE as in the current paper? For a stochastic optimal control problem, a recursive cost functional, with exponential discounting, can be described by the adapted solution to a BSDE. When the discounting is nonexponential, and/or the running cost rate and the terminal cost are initial time dependent, then the recursive cost functional had better to use a BSVIE, instead of a parameterized BSDE (as in \cite{Yong 2012,Wei-Yong-Yu 2017}). To be convincing, let us present a brief argument on that.

\ms

For any initial pair $(t,\xi)\in\sD$ and control $u(\cd)\in\sU[t,T]$, let $X(\cd)$ be the corresponding state process. Motivated by the nonexponential discounting,
one may consider the following cost functional
\bel{cost-none1}
 \h J(t,\xi;u(\cd))=\dbE_t\[h(t,X(T))+\int_t^T g(t,s,X(s),u(s))ds\]
\ee
with suitable functions $h(\cd)$ and $g(\cd)$.
If we let
\bel{cost-none-y}
 y(t,s)=\dbE_s\[h(t,X(T))+\int_s^Tg(t,r,X(r),u(r))dr\],\qq s\in[t,T],\ee
then for some $ z(\cd,\cd)$, the pair $( y(\cd,\cd), z(\cd,\cd))$ is the unique adapted solution to the following BSDE (with parameter $t$):
\bel{BSDE-noyz}
y(t,s)=h(t,X(T))+\int_s^T g(t,r,X(r),u(r))dr-\int_s^T z(t,r)dW(r),\q s\in[t,T],\ee
and
$$\h J(t,\xi;u(\cd))=y(t,t).$$
Following the above idea and inspired by \cite{Duffie-Epstein 1992,El Karoui-Peng-Quenez 1997,Wei-Yong-Yu 2017}, to construct the recursive cost functional for the state-control pair $(X(\cd),u(\cd))$, one might intuitively consider the following BSDE (with parameter $t$)
\bel{BSDE-yz}
y(t,s)=h(t,X(T))+\int_s^T g(t,r,X(r),u(r),y(t,r),z(t,r))dr-\int_s^T z(t,r)dW(r),\q r\in[t,T],\ee
and define the cost functional by
\bel{h J^R}\h J(t,\xi;u(\cd))=y(t,t).\ee
But, taking $s=t$ in \rf{BSDE-noyz}, we obtain
\bel{BSDE-yz1}
y(t,t)=h(t,X(T))+\int_t^T g(t,r,X(r),u(r),y(t,r),z(t,r))dr-\int_t^T z(t,r)dW(r),\q t\in[0,T],\ee
which is not an equation for the process $t\mapsto y(t,t)$ since $y(t,s)$ appears on the right-hand side of the above. Further, since $y(s,s)\neq y(t,s)$ in general, one finds that the dependence of current cost functional value $y(t,t)$ on the future cost functional values $\h J(s,X(s);u(\cd))=y(s,s);t< s\les T$ seems not to be valid. From this viewpoint, $y(t,t)$ defined above seems not to be a good candidate of the recursive cost functional with nonexponential discounting.

\ms

However, the above parameterized BSDE suggests us try to consider the following BSVIE:
\bel{BSVIE-yz}
Y(t)=h(t,X(T))+\int_t^T g(t,r,X(r),u(r),Y(r),Z(t,r))dr-\int_t^T Z(t,r)dW(r),\ee
and define the cost functional by
\bel{def-J} J(t,\xi;u(\cd))=Y(t),\ee
as we have done in this paper.
We know that under proper condition, the above BSVIE admits a unique adapted solution $(Y(\cd),Z(\cd,\cd))$ and process $Y(\cd)$ is the most natural candidate for the recursive cost functional with nonexponential discounting in the following sense: The current cost functional value $J(t,\xi;u(\cd))=Y(t)$ really depends on the cost functional values $J(r,X(r);u(\cd))=Y(r)$ for $r\in[t,T]$, through a BSIVE. Furthermore, such a recursive cost functional is time-consistent itself, in the following sense: The future value of the cost functional predicted/calculated today will match the value of the cost functional when that specific future time moment arrives. Some more detailed derivation and discussion can be found in \cite{Wang-Sun-Yong 2019}. In a word, when we consider the stochastic optimal control problems with recursive cost functional having generalized (nonexponential) discounting, BSVIE description should be a more proper choice than the parameterized BSDE.

\ms

Now, let use make a direct comparison between the equilibrium HJB equations resulting from the approach of \cite{Wei-Yong-Yu 2017} and the one of this paper. For convenience, we only consider the case that $\si$ is independent of control $u$. As in \cite{Wei-Yong-Yu 2017}, if the recursive cost functional is taken to be \rf{h J^R}, then the equilibrium HJB equation reads
\bel{Th-differential-form-si*}
\left\{\begin{aligned}
& \Th_s(t,s,x)+{1\over2}\tr[\Th_{xx}(t,s,x)a(s,x)]+\Th_{x}(t,s,x) b\big(s,x,\psi(s,s,x,\Th(s,s,x),\Th_x(s,s,x))\big)\\
&\qq +g\big(t,s,x,\psi(s,s,x,\Th(s,s,x),\Th_x(s,s,x)),{\color{red}\Th(t,s,x)},\Th_x(t,s,x) \si(s,x)\big)=0,\\
&\qq\qq\qq\qq\qq\qq\qq\qq\qq (t,s,x)\in \D[0,T]\times\dbR^n,\\
& \Th(t,T,x)=h(t,x),\q (t,x)\in[0,T]\times\dbR^n.\end{aligned}\right.\ee
Comparing the above with \rf{Th-differential-form-si}, we see that $\Th(t,s,x)$ in the above is replaced by $\Th(s,s,x)$ in \rf{Th-differential-form-si}. This is the main consequence of using BSVIE instead of parameterized BSDE. As we explained above, such a replacement makes the problem more natural. On the other hand, from the mathematical viewpoint, since $\Th(s,s,x)$ has been appeared in $\psi(\cd)$, regardless the above replacement, therefore, replacing $\Th(t,s,x)$ by $\Th(s,s,x)$ mathematically reduces the complexity of the equation.

\ms

Next, in the current paper, by using the idea inspired by multi-person differential games (\cite{Yong 2012, Wei-Yong-Yu 2017}) and representation of adapted solutions to BSVIEs (\cite{Yong 2017b,Wang-Yong 2019}), we obtain the equilibrium strategy for Problem (N), which is time-consistent, locally near optimal, and it is determined by the solution to an equilibrium HJB equation. We have seen that the equilibrium HJB equation in this paper is an interesting modification of that found in \cite{Wei-Yong-Yu 2017}.

\ms

Further, as a byproduct, in obtaining a Feynman-Kac type formula for the equilibrium HJB equation, we introduce a new class of BSVIEs for which the diagonal value $Z(s,s)$ of process $Z(\cd\,,\cd)$ appears. For such kind of equations, some very special cases have been studied and the general case is left widely open. Actually, our introduction in this paper initiates the research for such kind of BSVIEs. Some relevant ideas and results for the so-called extended BSVIEs can be found in \cite{Wang 2019}.

\ms

Finally, we provide a partial list of the widely open questions concerning our Problem (N):

\ms

$\bullet$ Solvability of general equilibrium HJB equation \rf{Th-differential-form-rewrite}, with non-degenerate and bounded diffusion, i.e.,
$$\l_0I\les\si(t,x,u)\si(t,x,u)^\top\les\l_1I.$$

$\bullet$ When $\si(t,x,u)$ is degenerate (and it is independent of $u$), is it possible to use viscosity solution to identify/characterize the equilibrium value function?

\ms

$\bullet$ If the map $\psi$ defined by \rf{define-psi} is not regular enough, or not unique, or even does not exist, what one can do for Problem (N)?
\ms

\end{document}